\theoremstyle{thmstyleone}%
\newtheorem{theorem}{Theorem}[section]
\newtheorem{lemma}[theorem]{Lemma}
\newtheorem{proposition}[theorem]{Proposition}%
\newtheorem{corollary}[theorem]{Corollary}
\theoremstyle{thmstyletwo}%
\newtheorem{remark}{Remark}%
\newcommand{\N}{\mathbb{N}}
\renewcommand{\R}{\mathbb{R}}
\renewcommand{\C}{\mathbb{C}}
\renewcommand{\P}{\mathbb{P}}
\newcommand{\cP}{\mathcal{P}}
\newcommand{\cM}{\mathcal{M}}
\newcommand{\cS}{\mathcal{S}}
\newcommand{\BEu}[1]{\underset{#1}{\mathlarger{\mathlarger{\mathbb{E}}}^{~}}\,}
\newcommand{\BEul}[1]{\underset{#1}{\mathlarger{\mathlarger{\mathbb{E}}}^{\text{\normalfont\footnotesize log}}}\,}
\newcommand{\ve}{\varepsilon}
\newcommand{\on}{\operatorname}
\newcommand{\li}{\on{Li}}
\renewcommand{\mod}[1]{\,(\on{mod}#1)}
\newcommand{\of}[1]{\left(#1\right)}
\renewcommand{\set}[1]{\left\{#1\right\}}
\begin{document}

\title[The Erd\H{o}s-Kac Theorem and the Prime Number Theorem]{Generalizations of the Erd\H{o}s-Kac Theorem and the Prime Number Theorem}

\author[1]{\fnm{Biao} \sur{Wang}}\email{wangbiao@amss.ac.cn}

\author[2]{\fnm{Zhining} \sur{Wei}}\email{wei.863@osu.edu}

\author[3]{\fnm{Pan} \sur{Yan}}\email{panyan@arizona.edu}

\author[4]{\fnm{Shaoyun} \sur{Yi}}\email{yishaoyun926@xmu.edu.cn}

\affil[1]{\orgdiv{Hua Loo-Keng Center for Mathematical Sciences}, \orgname{Academy of Mathematics and Systems Science, Chinese Academy of Sciences}, \orgaddress{\city{Beijing}, \postcode{100190},  \country{China}}}

\affil[2]{\orgdiv{Department of Mathematics}, \orgname{Ohio State University}, \orgaddress{\city{Columbus},  \state{OH} \postcode{43210}, \country{USA}}}

\affil[3]{\orgdiv{Department of Mathematics}, \orgname{University of Arizona}, \orgaddress{\city{Tucson}, \state{AZ} \postcode{85721},  \country{USA}}}

\affil[4]{\orgdiv{School of Mathematical Sciences}, \orgname{Xiamen University}, \orgaddress{\city{Xiamen}, \state{Fujian} \postcode{361005}, \country{China}}}

\abstract{In this paper, we study the linear independence between the distribution of the number of prime factors of integers and that of the largest prime factors of integers. Respectively, under a restriction on the largest prime factors of integers, we will refine the Erd\H{o}s-Kac Theorem and Loyd's recent result on Bergelson and Richter's  dynamical generalizations of the Prime Number Theorem. At the end, we will show that the analogue of these results holds with respect to the  Erd\H{o}s-Pomerance Theorem as well.}

\keywords{Erd\H{o}s-Kac Theorem, Erd\H{o}s-Pomerance Theorem, Largest Prime Factor, Prime Number Theorem}

\pacs[MSC Classification]{11K36, 37A44}

\maketitle

\section{Introduction and statement of results}

Let $\Omega(n)$ be the number of prime factors of $n$ with multiplicity counted. The distribution of $\Omega(n)$ is an intriguing topic in analytic number theory. The well-known Erd\H{o}s-Kac Theorem \cite{ErdosKac1940}  asserts that  $\Omega(n)$ satisfies the following normal distribution
\begin{equation}\label{eqn_EK}
	\lim_{x \to \infty} \frac1x\sum_{1\le n\le x} F \Big( \frac{\Omega(n) - \log \log x }{\sqrt{\log \log x}} \Big) 
	=
	\frac{1}{\sqrt{2\pi}} \int_{-\infty}^{\infty} F(t) e^{-t^2/2} \, dt
\end{equation}
for any $F\in C_c(\R)$, where $C_c(\R)$ denotes the set of  compactly supported continuous functions on $\R$. This result of Erd\H{o}s and Kac ignites the study of probabilistic number theory and  has been widely generalized in the literature (cf. \cite{Elliott1979, Elliott1980} etc.). One of the generalizations is to consider the weighted variant of \eqref{eqn_EK} for multiplicative functions. In 2019, Elboim and Gorodetsky \cite{ElboimGorodetsky2019} showed that 
\begin{equation}\label{eqn_EK_weighted}
\begin{aligned}
	&\lim_{x \to \infty} \Big(\sum_{1\le n\le x}f(n)\Big)^{-1}\sum_{1\le n\le x}f(n) F\Big( \frac{\Omega(n) - \alpha\log \log x }{\sqrt{\alpha\log \log x}} \Big) \\
	&=
	\frac{1}{\sqrt{2\pi}} \int_{-\infty}^{\infty} F(t) e^{-t^2/2} \, dt
\end{aligned}
\end{equation}
for any $F\in C_c(\R)$, where $f\colon \mathbb{N}\to [0,\infty)$ is a multiplicative function satisfying the following two conditions for some real $d>-1$ and $\alpha>0$:
\begin{align}
	{\rm (I).}\quad 	&\sum_{p\le x}\frac{f(p)\log p}{p^d}=\alpha x+O_A\of{\frac{x}{\log^A x}}\quad \text{ for all } A>0; \label{eqn_EG1}\\
	{\rm (II).}\quad	&\frac{f(p^i)}{p^{di}}=O(r^i)\quad \text{ for some } 1\le r< \sqrt2 \text{ and all } i\ge1. \label{eqn_EG2}
\end{align}
We note  that Tenenbaum \cite{Tenenbaum2017, Tenenbaum2017c} also showed \eqref{eqn_EK_weighted} for a class of multiplicative functions; and recently Khan, Milinovich and Subedi \cite{KhanMilinovichSubedi2021} showed the case $f(n)=d_k(n)$ using Granville-Soundararajan's sieve method \cite{GranvilleSoundararajan2007}. Here, $d_k(n)\colonequals\sum_{a_1\cdots a_k=n}1$ is the $k$-th divisor function with $k\in\N$. 

In this paper, we will consider a refinement of  \eqref{eqn_EK_weighted} with the largest prime factors of integers for multiplicative functions behaving like $d_k(n)$. Let $P^+(n)$  denote the largest prime factor of $n$ for $n\ge2$, and set $P^+(1)=1$. In 1977, Alladi \cite{Alladi1977} showed that $P^+(n)$ is equidistributed in arithmetic progressions when he studied an application of duality between the prime factors of integers. Recently, Kural, McDonald and Sah \cite{KuralMcDonaldSah2020} generalized Alladi's result to the natural density over number fields. More precisely, if $S$, a set of primes, has a natural density $\delta(S)$ (see Sect.~\ref{naturaldensity}), then Kural et al. proved the following equidistribution property of $P^+(n)$: 
\begin{equation}\label{eqn_KMS}
	\lim_{x\to\infty}\frac1x\sum_{\substack{1\le n\le x\\ P^+(n)\in S}} 1= \delta(S).
\end{equation}
From \eqref{eqn_KMS}, we see that the largest prime factors of integers are randomly distributed. 
This motivates us to expect that they are linearly independent with the distribution of sensible multiplicative functions  like $\Omega(n)$ and $d_k(n)$. In particular, for a multiplicative function $f(n)$, if it takes a constant value at primes, then it behaves like $d_k(n)$. An arithmetic function $f\colon \N\to\C$ is said to be \textit{divisor-bounded} if there is a fixed integer $k\in \N$ such that $\vert f\vert \le d_k$. Our first result  is a refinement of \eqref{eqn_EK_weighted} for a class of multiplicative functions containing $d_k(n)$ as weights.

\begin{theorem}\label{mainthm_EK}
	Suppose $\alpha>0$ is a positive real number. Let $f$ be a non-negative divisor-bounded  multiplicative function satisfying $f(p)=\alpha$ for all primes $p$. Let $F \in C_c(\R)$. If $S$ is a set of primes of natural density $\delta(S)$, then we have
	\begin{equation}\label{eqn_mainthm_EK}
			\begin{aligned}
		&\lim_{x \to \infty} \Big(\sum_{1\le n\le x}f(n)\Big)^{-1}\sum_{\substack{1\le n\le x\\ P^+(n)\in S}}f(n) F\Big( \frac{\Omega(n) - \alpha\log \log x }{\sqrt{\alpha\log \log x}} \Big) \\
		&=\delta(S)\cdot\Big(\frac{1}{\sqrt{2\pi}} \int_{-\infty}^{\infty} F(t) e^{-t^2/2} \, dt\Big).
		\end{aligned}
	\end{equation}
\end{theorem}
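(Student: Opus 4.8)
The plan is to reduce \eqref{eqn_mainthm_EK} to a single equidistribution-type comparison, which is then combined with the Selberg--Delange analysis behind \eqref{eqn_EK_weighted}. First I would check that $f$ is admissible for \eqref{eqn_EK_weighted} with $d=0$: since $f(p)=\alpha$, the prime number theorem gives $\sum_{p\le x}f(p)\log p=\alpha\sum_{p\le x}\log p=\alpha x+O_A(x/\log^A x)$, which is \eqref{eqn_EG1}, while $\lvert f\rvert\le d_k$ gives $f(p^i)\le\binom{i+k-1}{k-1}\ll_k(i+1)^{k-1}=O(r^i)$ for every $r>1$, which is \eqref{eqn_EG2}. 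Thus \eqref{eqn_EK_weighted} holds for $f$, and the same input (Landau--Selberg--Delange) yields, uniformly for $z$ in a fixed complex neighbourhood $K$ of $1$,
\begin{equation}\label{eqn_plan_SD}
	\sum_{1\le n\le x}f(n)z^{\Omega(n)}=\frac{H(z)}{\Gamma(\alpha z)}\,x(\log x)^{\alpha z-1}\bigl(1+o(1)\bigr)\qquad(H\text{ holomorphic},\ H(1)\ne0).
\end{equation}
The claim I would target is the $S$-restricted analogue of \eqref{eqn_plan_SD},
\begin{equation}\label{eqn_plan_ratio}
	\sum_{\substack{1\le n\le x\\ P^+(n)\in S}}f(n)z^{\Omega(n)}=\bigl(\delta(S)+o(1)\bigr)\sum_{1\le n\le x}f(n)z^{\Omega(n)}\qquad(z\in K),
\end{equation}
uniformly in $z\in K$. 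Granting \eqref{eqn_plan_ratio}, its left side has the same shape as \eqref{eqn_plan_SD} with $H(z)$ replaced by $\delta(S)H(z)$, so the final extraction step of \cite{ElboimGorodetsky2019} then yields \eqref{eqn_EK_weighted} with the summation and the normalising factor both restricted to $P^+(n)\in S$; multiplying by the mass ratio $\sum_{n\le x,\,P^+(n)\in S}f(n)/\sum_{n\le x}f(n)$, which is the $z=1$ case of \eqref{eqn_plan_ratio} and so tends to $\delta(S)$ (a weighted form of \eqref{eqn_KMS}), gives \eqref{eqn_mainthm_EK}.

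For \eqref{eqn_plan_ratio} I would set $g=g_z$, $g(n)=f(n)z^{\Omega(n)}$ (multiplicative, $g(p)=\alpha z$, divisor-bounded uniformly for $z\in K$), and decompose each $n\ge2$ by its largest prime factor, $n=p^am$ with $p=P^+(n)$, $p\nmid m$ and $P^+(m)<p$:
\begin{equation*}
	\sum_{\substack{n\le x\\ P^+(n)\in S}}g(n)=\sum_{p\in S}\ \sum_{a\ge1}g(p^a)\sum_{\substack{m\le x/p^a\\ P^+(m)<p}}g(m),
\end{equation*}
and similarly with $S$ replaced by the set of all primes. The terms with $a\ge2$ are negligible: bounding $g(p^a)=O(1)$ and the inner smooth sum by $\Psi(x/p^2,p)(\log x)^{O(1)}$ and splitting the range of $p$ at $(\log x)^B$ and at $x^{1/2}$, each piece is $o\bigl(x(\log x)^{\alpha\lvert z\rvert-1}\bigr)=o\bigl(\sum_{n\le x}\lvert g(n)\rvert\bigr)$ once $B=B(k,\alpha)$ is large (this uses $\alpha>0$). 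Hence \eqref{eqn_plan_ratio} reduces, via $g(p)=\alpha z$, to
\begin{equation*}
	\sum_{p\in S}\Psi_g(x/p,p)=\bigl(\delta(S)+o(1)\bigr)\sum_{p}\Psi_g(x/p,p),\qquad \Psi_g(y,p):=\sum_{\substack{m\le y\\ P^+(m)<p}}g(m),
\end{equation*}
whose full right side equals $\sum_{n\le x}g(n)$ up to the negligible terms. Only $p$ with $\log p\asymp\log x$ carry mass here, and for such $p$ the theory of mean values of multiplicative functions over smooth numbers gives $\Psi_g(x/p,p)$ as a slowly varying function of $p$, of size roughly $(x/p)(\log p)^{\alpha z-1}$ times a Dickman-type factor of $\log(x/p)/\log p$. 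The natural-density hypothesis gives $\pi_S(t)\sim\delta(S)\pi(t)$, hence by partial summation $\sum_{p\in S,\,p\le y}w(p)\sim\delta(S)\sum_{p\le y}w(p)$ for any sufficiently regular weight $w$; applying this with $w(p)=\Psi_g(x/p,p)$ (after trimming the negligible ranges of very small and very large $p$) gives the displayed identity, and with it \eqref{eqn_plan_ratio}.

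The hard part is the uniformity needed in this last step: a Dickman/Selberg--Delange estimate for $\Psi_g(y,p)$ that is uniform both in the smoothness parameter over the range $\log p\asymp\log x$ and in the weight $g=g_z$ for $z\in K$, with enough regularity in $p$ to compare against $\pi_S$. In Dirichlet-series language this is the assertion that $\sum_{P^+(n)\in S}f(n)z^{\Omega(n)}n^{-s}=\zeta(s)^{\alpha z}\mathcal{H}_S(s,z)$ with $\mathcal{H}_S$ holomorphic in a standard zero-free region and $\mathcal{H}_S(1,z)=\delta(S)H(z)$. The subtlety is that this Dirichlet series, having non-negative coefficients of mean order $(\log x)^{\alpha\Re s-1}$, does not continue past $\Re s=1$ by itself, so the factor $\zeta(s)^{\alpha z}$ has to be produced from the largest-prime-factor decomposition --- where it arises through $\prod_{q<p}(1-q^{-s})^{-\alpha z}$ --- rather than by division, and the surviving density $\delta(S)$ then enters through a comparison of the type $\sum_{p\in S}p^{-s}/\sum_{p}p^{-s}\to\delta(S)$. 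The remaining ingredients --- admissibility of $f$, the $a\ge2$ bookkeeping, and the probabilistic passage from \eqref{eqn_plan_ratio} back to \eqref{eqn_mainthm_EK} --- are routine once \eqref{eqn_plan_ratio} is in hand.
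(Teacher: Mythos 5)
Your overall architecture --- decompose by the largest prime factor, show that restricting to $P^+(n)\in S$ multiplies the relevant sum by $\delta(S)+o(1)$, and quote the weighted Erd\H{o}s--Kac theorem \eqref{eqn_EK_weighted} for the unrestricted piece --- matches the paper's. But the route you choose to implement it, via the two-variable sum $\sum_{n\le x}f(n)z^{\Omega(n)}$ uniformly for complex $z$ near $1$ followed by the Elboim--Gorodetsky extraction, leaves its central step unproved, and you say so yourself: you need an asymptotic for $\Psi_{g_z}(x/p,p)$ that is uniform in $z\in K$ and regular enough in $p$ to be integrated against $d\pi_S$. That is a genuine gap, not a routine verification. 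Tenenbaum--Wu's Theorem~\ref{twthm} concerns a class of essentially non-negative functions; for $g_z(p)=\alpha z$ complex one would need a friable analogue of the LSD method with complex exponent, uniform in the smoothness parameter, and your alternative of manufacturing $\zeta(s)^{\alpha z}$ from $\prod_{q<p}(1-q^{-s})^{-\alpha z}$ is exactly the part with no proof sketch. (There are also smaller loose ends: the normalising step at $z=1$ requires $\delta(S)>0$, and the $a\ge2$ bookkeeping with $\lvert z\rvert>1$ needs more care than ``$g(p^a)=O(1)$''.)

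The paper closes this gap with two observations that make the whole $z$-uniformity unnecessary. First, since $F$ is uniformly continuous and $\psi(pn)-\psi(n)=1/\sqrt{\alpha\log\log x}=o(1)$, one has $F(\psi(pn))=F(\psi(n))+o(1)$ uniformly in $n$ (Eq.~\eqref{eqn_keyeqn1}); hence one may work directly with the single non-multiplicative weight $h(n)=f(n)F(\psi(n))$, reduce the theorem to $\sum_{P^+(n)\in S}h(n)=\delta(S)\sum_{n\le x} h(n)+o(x(\log x)^{\alpha-1})$ as in \eqref{eqn_pf_EK}, and never reopen the Gaussian analysis --- \eqref{eqn_EK_weighted} is quoted as a black box. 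Second, the density comparison between $\sum_{p\in S,\,y\le p\le x}\Psi_h(x/p,p)$ and $\delta(S)\int_y^x\Psi_h(x/t,t)\,dt/\log t$ needs no asymptotic or regularity for $\Psi_h$ at all: interchanging the order of summation as in \eqref{pf_s3} bounds the discrepancy by $\sum_{n\le x/y}f(n)\big(e_S(x/n)+1\big)\ll xv_S(y)(\log x)^{\alpha}+x(\log x)^{\alpha-1}/y$, using only $\lvert h\rvert\ll f$, the bound $\sum_{n\le x}f(n)/n\ll(\log x)^{\alpha}$, and the monotonicity of $e_S$; Lemma~\ref{choice_of_y} then selects $y=y(x)$ with $v_S(y)\log x=o(1)$ while $u=\log x/\log y\to\infty$. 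With these two devices your decomposition goes through; as written, the proposal defers precisely the step that carries the difficulty.
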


Clearly, the weight function $f$ in Theorem~\ref{mainthm_EK} satisfies conditions \eqref{eqn_EG1} and \eqref{eqn_EG2}. Taking $S$ to be the set of all primes, we recover the weighted Erd\H{o}s-Kac theorem~\eqref{eqn_EK_weighted}. 
Examples of common weight functions in Theorem~\ref{mainthm_EK} are $d_\alpha(n), \mu(n)^2$, and $\alpha^{\omega(n)}$ for $\alpha>0$, where $d_\alpha(n)$ is the divisor function associated to Dirichlet series $\zeta(s)^\alpha$, $\mu(n)$ is the M\"{o}bius function, and $\omega(n)$ is the number of distinct prime factors of $n$. Here $\zeta(s)=\sum_{n=1}^\infty 1/n^s \,(\on{Re} s>1)$ is the Riemann zeta function. We remark that the proof of Theorem~\ref{mainthm_EK} applies to the $\Omega(n)$ replaced by $\omega(n)$ as well, and the result is the same.

Our next goal is to apply Theorem~\ref{mainthm_EK} to establish a new refinement of dynamical generalizations of the Prime Number Theorem (PNT) discovered by Bergelson and Richter \cite{BergelsonRichter2020}. Let $(X,\mu,T)$ be a uniquely ergodic additive topological dynamical system and $C(X)$ the space of continuous functions on $X$. In 2020, Bergelson and Richter \cite{BergelsonRichter2020} generalized the PNT in the setting of dynamical systems:  for every $x_0\in X$ and $g\in C(X)$ we have
\begin{equation}\label{eqn_BR2020thmA}
	\lim_{N\to\infty}\frac1N\sum_{n=1}^Ng(T^{\Omega(n)}x_0)=\int_Xg\,d\mu.
\end{equation}
In particular, taking $X$ to be the two-point rotation system in \eqref{eqn_BR2020thmA}, we get the following result: 
\begin{equation}\label{eqn_Landau}
	\lim_{N\to\infty}\frac1N\sum_{n=1}^N\lambda(n)=0
\end{equation}
for the Liouville function $\lambda(n)\colonequals(-1)^{\Omega(n)}$, which is equivalent to the Prime Number Theorem (e.g., \cite{Landau1953, Mangoldt1897}).
Also, Eq.~\eqref{eqn_BR2020thmA} unifies the classical results of Pillai \cite{Pillai1940}, Selberg \cite{Selberg1939}, Erd\H{o}s \cite{Erdos1946} and Delange \cite{Delange1958}, see the remarks on Theorem A in \cite{BergelsonRichter2020}. We refer the readers  to Bergelson and Richter's work \cite{BergelsonRichter2020} for more applications of \eqref{eqn_BR2020thmA}. 

Recently, Loyd \cite{Loyd2021} generalized  \eqref{eqn_BR2020thmA} to a disjoint form with the Erd\H{o}s-Kac Theorem:
for any $F \in C_c(\R)$, $g\in C(X)$ and every $x_0\in X$  we have
\begin{equation}\label{eqn_Loyd2021}
	\begin{aligned}
	&\lim_{N \to \infty} \frac1N\sum_{n=1}^N F \Big( \frac{\Omega(n) - \log \log N }{\sqrt{\log \log N}} \Big) g(T^{\Omega(n) }x_0) \\
	&=
	\Big(\frac{1}{\sqrt{2\pi}} \int_{-\infty}^{\infty} F(t) e^{-t^2/2} \, dt\Big)\Big( \int_X g \, d\mu \Big).
	\end{aligned}
\end{equation}
As an application of Theorem~\ref{mainthm_EK}, our second result is the following refinement of \eqref{eqn_Loyd2021}:

\begin{theorem}
	\label{mainthm_EKPNT}
	Let $(X, \mu, T)$ be uniquely ergodic. Let $F \in C_c(\R)$. If $S$ is a set of primes of natural density $\delta(S)$, then we have
	\begin{equation}\label{eqn_mainthm_EKPNT}
			\begin{aligned}
		&\lim_{N \to \infty}\frac1N\sum_{\substack{1\le n\le N\\ P^+(n)\in S}} F\Big( \frac{\Omega(n) - \log \log N }{\sqrt{\log \log N}} \Big) g(T^{\Omega(n) }x_0) \\
		&=
		\delta(S)\cdot\Big(\frac{1}{\sqrt{2\pi}} \int_{-\infty}^{\infty} F(t) e^{-t^2/2} \, dt\Big)\Big( \int_X g \, d\mu \Big)
		\end{aligned}
	\end{equation}
	for all $g \in C(X)$ and $x_0 \in X$. 
\end{theorem}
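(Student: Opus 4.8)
\emph{Plan of proof.} The starting point is that the summand in \eqref{eqn_mainthm_EKPNT} depends on $n$ only through $\Omega(n)$. Writing $\phi_N(m):=F\bigl(\frac{m-\log\log N}{\sqrt{\log\log N}}\bigr)g(T^m x_0)$, which is bounded by $\lVert F\rVert_\infty\lVert g\rVert_\infty$ and vanishes unless $\lvert m-\log\log N\rvert\le\Lambda\sqrt{\log\log N}$, where $\operatorname{supp}F\subseteq[-\Lambda,\Lambda]$, and setting
\[
\pi_m(N):=\#\{n\le N:\Omega(n)=m\},\qquad \pi_m^S(N):=\#\{n\le N:\Omega(n)=m,\ P^+(n)\in S\},
\]
the left-hand side of \eqref{eqn_mainthm_EKPNT} is $\frac1N\sum_m\pi_m^S(N)\phi_N(m)$ while the left-hand side of Loyd's identity \eqref{eqn_Loyd2021} is $\frac1N\sum_m\pi_m(N)\phi_N(m)$. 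Thus Theorem~\ref{mainthm_EKPNT} will follow from \eqref{eqn_Loyd2021} once we establish the \emph{uniform comparison}
\begin{equation}\label{eqn_plan_compare}
\pi_m^S(N)=\bigl(\delta(S)+o(1)\bigr)\pi_m(N)\qquad\text{uniformly for }\lvert m-\log\log N\rvert\le\Lambda\sqrt{\log\log N},
\end{equation}
for every fixed $\Lambda>0$: indeed, since $\phi_N$ is supported in that window and $\frac1N\sum_m\pi_m(N)\le1$, the difference $\frac1N\sum_m\pi_m^S(N)\phi_N(m)-\delta(S)\frac1N\sum_m\pi_m(N)\phi_N(m)$ is $O\bigl(\lVert F\rVert_\infty\lVert g\rVert_\infty\bigr)\cdot o(1)=o(1)$, while $\frac1N\sum_m\pi_m(N)\phi_N(m)\to\bigl(\frac1{\sqrt{2\pi}}\int_{-\infty}^{\infty}F(t)e^{-t^2/2}\,dt\bigr)\bigl(\int_X g\,d\mu\bigr)$ by \eqref{eqn_Loyd2021}; multiplying by $\delta(S)$ gives \eqref{eqn_mainthm_EKPNT}.

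It remains to prove \eqref{eqn_plan_compare}. This is a refinement of the equidistribution \eqref{eqn_KMS} of Kural, McDonald and Sah, now localized at a prescribed value of $\Omega$ and made uniform across the Erd\H{o}s--Kac window; it is precisely the kind of estimate that the proof of Theorem~\ref{mainthm_EK} produces. The plan is to combine Alladi's duality identity \cite{Alladi1977}, which re-expresses the constraint $P^+(n)\in S$ in arithmetic form (this is the mechanism behind \eqref{eqn_KMS}), with the Selberg--Delange / Granville--Soundararajan method \cite{GranvilleSoundararajan2007, KhanMilinovichSubedi2021}, which controls sums of the shape $\sum_{n\le N,\ P^+(n)\in S}z^{\Omega(n)}$ and, upon extracting the coefficient recording $\Omega(n)=m$, yields a Sathe--Selberg-type main term $\pi_m^S(N)\sim\delta(S)\,\pi_m(N)$ with an error term uniform in $m$ over the window. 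Theorem~\ref{mainthm_EK} (taken with $f\equiv1$) is the weak form of this statement---convergence of the Gaussian-rescaled distribution of $\Omega$ restricted to $P^+(n)\in S$ towards $\delta(S)$ times a standard Gaussian---whereas \eqref{eqn_plan_compare} is the pointwise-in-$m$ strengthening; passing from the former to the latter is routine once the error term is uniform.

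The main obstacle is therefore \eqref{eqn_plan_compare}, and within it the task of making Alladi's combinatorial treatment of the constraint $P^+(n)\in S$ cooperate \emph{uniformly in the value of $\Omega(n)$} with the analytic sieve: the density $\delta(S)$ is only an average statement about $S$, and one must check that imposing the additional constraint $\Omega(n)=m$ for $m$ throughout a window of length $O(\sqrt{\log\log N})$ about $\log\log N$ does not disturb this average. An essentially equivalent route, should \eqref{eqn_plan_compare} prove awkward to isolate, is to mimic Loyd's derivation of \eqref{eqn_Loyd2021} from \eqref{eqn_BR2020thmA} line by line, replacing the Erd\H{o}s--Kac input there by Theorem~\ref{mainthm_EK}, the prime-number-theorem input by \eqref{eqn_KMS}, and \eqref{eqn_BR2020thmA} by its $P^+(n)\in S$-restricted analogue $\frac1N\sum_{n\le N,\ P^+(n)\in S}g(T^{\Omega(n)}x_0)\to\delta(S)\int_X g\,d\mu$; the only genuinely new point in that approach is to carry the constraint $P^+(n)\in S$ through the identity $\log n=\sum_{d\mid n}\Lambda(d)$ exploited by Bergelson and Richter, where on writing $n=p^k m$ one must show that the discrepancy between $P^+(n)\in S$ and $P^+(m)\in S$ contributes negligibly---again a uniformity issue of the same nature.
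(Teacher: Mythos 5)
Your reduction of \eqref{eqn_mainthm_EKPNT} to the comparison $\pi_m^S(N)=(\delta(S)+o(1))\pi_m(N)$, where $\pi_m^S(N)=\#\{n\le N:\Omega(n)=m,\ P^+(n)\in S\}$, uniformly for $m$ in the Erd\H{o}s--Kac window, is logically sound, but that comparison is where the entire difficulty of the theorem now sits, and it is not proved. It is a local limit theorem of Sathe--Selberg type for $\Omega(n)$ under the constraint $P^+(n)\in S$, which is strictly stronger than Theorem~\ref{mainthm_EK}: that theorem with $f\equiv 1$ gives only weak convergence of the rescaled distribution of $\Omega$ on $\{n:P^+(n)\in S\}$, and weak convergence of distributions never yields pointwise-in-$m$ asymptotics for the individual counts, so the claim that the passage is ``routine once the error term is uniform'' is circular --- the uniform error term \emph{is} the local statement. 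The proposed route to it is also problematic: $1_{P^+(n)\in S}\,z^{\Omega(n)}$ is not multiplicative, so the Selberg--Delange/LSD machinery does not apply directly to $\sum_{n\le N,\,P^+(n)\in S}z^{\Omega(n)}$; one would have to redo the entire Alladi-type decomposition with the extra parameter $z$, keep all error terms uniform in $z$ (including the density error $e_S$, for which the hypothesis of natural density supplies no rate whatsoever), and then extract the $m$-th coefficient by a saddle-point argument. None of this is carried out, and it is not clear it can be for an arbitrary $S$ possessing only a natural density.

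The paper avoids any local statement entirely by treating the factor $g(T^{\Omega(n)}x_0)$ dynamically, \`a la Bergelson--Richter. Lemma~\ref{lem_keylem_EKPNT} shows that replacing $n$ by $mn$ perturbs $1_{P^+(n)\in S}$ and $F(\psi(n))$ negligibly: one has $P^+(mn)=P^+(n)$ unless $n$ is $m$-smooth, the $m$-smooth $n\le N$ contribute $O(N^{-c})$ to the average, and uniform continuity of $F$ absorbs the shift in $\psi$. Theorem~\ref{thm_keythm_pmax} then gives asymptotic $T$-invariance of the measures $\frac1N\sum_{n\le N}1_{P^+(n)\in S}F(\psi(n))\delta_{T^{\Omega(n)}x_0}$, so by unique ergodicity every weak-$*$ limit point is a constant multiple of $\mu$, and the constant is identified by Theorem~\ref{mainthm_EK} with $f\equiv1$. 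Your alternative route (mimicking the derivation of \eqref{eqn_Loyd2021} while carrying the constraint $P^+(n)\in S$ along) is essentially this argument, and the ``uniformity issue'' you flag there is in fact resolved by the easy smooth-number estimate just described; but you present that route only as a contingency and do not execute it. As written, the proof has a genuine gap.
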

\begin{remark}
	The first author of this paper proved \eqref{eqn_mainthm_EKPNT} for the special case $F=1$ in {\cite[Theorem~1.3]{Wang2021}}. On the other hand, taking $g=1$, we get a refinement of Erd\H{o}s-Kac Theorem~\eqref{eqn_EK} over $P^+(n)\in S$.
\end{remark}

At the end of this paper, we state a variant of Theorem~\ref{mainthm_EKPNT} on Euler's function. Let $\varphi(n)$ be Euler's totient function. In 1985, Erd\H{o}s and Pomerance \cite{ErdosPomerance1985} showed an Erd\H{o}s-Kac type theorem:
\begin{equation}\label{eqn_EP}
	\lim_{N \to \infty} \frac1N\sum_{1\le n\le N} F\Big( \frac{\Omega(\varphi(n)) - \frac12(\log \log N)^2 }{\frac1{\sqrt3}(\log \log N)^{3/2}} \Big)
	=
	\frac{1}{\sqrt{2\pi}} \int_{-\infty}^{\infty} F(t) e^{-t^2/2} \, dt
\end{equation}
for any $F\in C_c(\R)$. Like the Erd\H{o}s-Kac Theorem, Eq.~\eqref{eqn_EP} has been widely studied in the literature (cf. \cite{BassilyKataiWijsmuller1997} etc.). Making some adjustments in the proof of Theorem~\ref{mainthm_EKPNT}, we have the following analogue with respect to the Erd\H{o}s-Pomerance Theorem. 

\begin{theorem}
	\label{mainthm_EPPNT}
	Let $(X, \mu, T)$ be uniquely ergodic. Let $F \in C_c(\R)$. If $S$ is a set of primes of natural density $\delta(S)$, then we have
	\begin{equation}\label{eqn_mainthm_EPPNT}
			\begin{aligned}
		&\lim_{N \to \infty}\frac1N\sum_{\substack{1\le n\le N\\ P^+(n)\in S}} F\Big( \frac{\Omega(\varphi(n)) - \frac12(\log \log N)^2 }{\frac1{\sqrt3}(\log \log N)^{3/2}} \Big) g(T^{\Omega(n) }x_0) \\
		&=
		\delta(S)\cdot\Big(\frac{1}{\sqrt{2\pi}} \int_{-\infty}^{\infty} F(t) e^{-t^2/2} \, dt\Big)\Big( \int_X g \, d\mu \Big)
		\end{aligned}
	\end{equation}
	for all $g \in C(X)$ and $x_0 \in X$. 
\end{theorem}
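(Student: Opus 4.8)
The plan is to adapt the proof of Theorem~\ref{mainthm_EKPNT} essentially line by line, trading the Erd\H{o}s--Kac statistics of $\Omega(n)$ for the Erd\H{o}s--Pomerance statistics \eqref{eqn_EP} of $\Omega(\varphi(n))$, while the dynamical iterate remains governed by $\Omega(n)$. First I would separate the Gaussian weight from the dynamics by a routine approximation: given $\ve>0$, sandwich $F$ between finite linear combinations of indicator functions of intervals, so that it is enough to treat, for each fixed interval $J\subseteq\R$, the ``slab'' $I=I_{N,J}$ of integers $n\le N$ with
\[
\frac{\Omega(\varphi(n))-\frac12(\log\log N)^{2}}{\frac1{\sqrt3}(\log\log N)^{3/2}}\in J ,
\]
and to show that $\tfrac1N\sum_{n\le N,\ P^{+}(n)\in S,\ n\in I}g(T^{\Omega(n)}x_0)\to\delta(S)\cdot\big(\tfrac{1}{\sqrt{2\pi}}\int_{J}e^{-t^{2}/2}\,dt\big)\cdot\big(\int_X g\,d\mu\big)$. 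The reason for passing to slabs is that $I$, like the set $\{n:P^{+}(n)\in S\}$, is essentially invariant under $n\mapsto 2n$: multiplication by $2^{j}$ adds exactly $j$ to $\Omega(n)$, alters $\Omega(\varphi(n))$ by $j+O(1)$ (the $O(1)$, equal to $0$ or $-1$, coming from the $2$-adic valuation of $n$), and leaves $P^{+}(n)$ unchanged for all $n\ge2$; since the slab has width of order $(\log\log N)^{3/2}\to\infty$, a bounded shift of $\Omega(\varphi(n))$ displaces only an $o(1)$-proportion of it. Hence the dynamical part follows from the Bergelson--Richter/Loyd averaging machinery exactly as in the case $F=1$ of Theorem~\ref{mainthm_EKPNT} established in \cite[Theorem~1.3]{Wang2021}, now applied to the set $\{n:P^{+}(n)\in S\}\cap I$: one uses unique ergodicity in the form $\tfrac1H\sum_{0\le j<H}g(T^{j}y)\to\int_X g\,d\mu$ uniformly in $y\in X$, averages over $n\mapsto 2^{j}n$ for $0\le j<H$ with any $H=H(N)\to\infty$ satisfying $H=o((\log\log N)^{3/2})$, and replaces $g(T^{\Omega(n)}x_0)$ by $\int_X g\,d\mu+o(1)$.

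It then remains to prove the arithmetic input, which is the $\Omega\circ\varphi$-analogue of Theorem~\ref{mainthm_EK} with trivial weight, in the form
\[
\frac1N\#\{\,n\le N:\ P^{+}(n)\in S,\ n\in I_{N,J}\,\}\;\longrightarrow\;\delta(S)\cdot\frac{1}{\sqrt{2\pi}}\int_{J}e^{-t^{2}/2}\,dt .
\]
Here I would peel off the largest prime factor: writing $p=P^{+}(n)$ and $n=pm$, one has $\Omega(\varphi(n))=\Omega(\varphi(m))+\Omega(p-1)$ whenever $p^{2}\nmid n$, while when $p^{2}\mid n$ the left side differs from this by a correction that is $o((\log\log N)^{3/2})$ off a negligible set. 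One then evaluates the sum over $m\le N/p$ with $P^{+}(m)\le p$ by a version of the Erd\H{o}s--Pomerance theorem \eqref{eqn_EP} that is uniform in the smoothness constraint, and sums over the primes $p\in S$ by partial summation against the Kural--McDonald--Sah equidistribution \eqref{eqn_KMS} (equivalently, the Alladi duality already used for Theorem~\ref{mainthm_EK}); this is what produces the factor $\delta(S)$. The Gaussian law is not disturbed by the restriction on $P^{+}(n)$ because, for almost every prime $p$, $\Omega(p-1)=(1+o(1))\log\log p=O(\log\log N)=o((\log\log N)^{3/2})$, so discarding the top prime moves the normalized statistic only by $o(1)$.

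I expect the main obstacle to be making that last point uniform, and this is the genuinely new feature compared with Theorem~\ref{mainthm_EKPNT}. In the $\Omega(n)$-setting, deleting $P^{+}(n)$ changes the statistic by the bounded amount $1$; here $P^{+}(n)$ injects $\Omega(P^{+}(n)-1)$ prime factors into $\varphi(n)$, a quantity that is unbounded --- as large as $\asymp\log N$ in the worst case --- and is, a priori, correlated with the event $P^{+}(n)\in S$ (for instance if $S$ is cut out by a congruence on $p-1$, or by the parity of $\Omega(p-1)$). To control it one needs (a) a Tur\'an--Kubilius / Barban--Davenport--Halberstam type estimate ensuring $\sum_{p\le N,\ p\in S}\Omega(p-1)^{2}/p=O\big((\log\log N)^{3}\big)$, so that the $n$ for which $\Omega(P^{+}(n)-1)$ is abnormally large remain negligible even after imposing $P^{+}(n)\in S$; and (b) the Erd\H{o}s--Pomerance moment computation carried out with explicit dependence on the largest prime factor $p$, strong enough to be summed against $\sum_{p}1/p$ once the restriction $p\in S$ is inserted via \eqref{eqn_KMS}. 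This is the step where the Granville--Soundararajan / Elboim--Gorodetsky moment method behind \eqref{eqn_EP} has to be fused with the Alladi-duality input behind \eqref{eqn_KMS}; once the resulting uniform arithmetic estimate is in hand, the dynamical half is identical to the proof of Theorem~\ref{mainthm_EKPNT}, and Theorem~\ref{mainthm_EPPNT} follows.
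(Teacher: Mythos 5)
Your arithmetic half coincides in substance with the paper's. The paper also reduces to the unweighted statement \eqref{eqn_EP_pmax}, reruns the proof of Theorem~\ref{mainthm_EK} with $\psi(n)=\big(\Omega(\varphi(n))-\tfrac12(\log\log x)^2\big)\big/\big(\tfrac1{\sqrt3}(\log\log x)^{3/2}\big)$, and isolates exactly the difficulty you point to: peeling off $p=P^+(n)$ now shifts the statistic by the unbounded amount $\Omega(p-1)/\big(\tfrac1{\sqrt3}(\log\log x)^{3/2}\big)$. However, the paper disposes of this more cheaply than your items (a)--(b) suggest: it splits the primes at the threshold $\Omega(p-1)>\eta_0\cdot\tfrac1{\sqrt3}(\log\log x)^{3/2}$ with $\eta_0=(\log\log x)^{-1/4}$, \emph{drops} the condition $p\in S$ (legitimate, since the error being bounded is a sum of nonnegative terms, so no correlation between $\Omega(p-1)$ and membership in $S$ has to be ruled out), and applies Markov's inequality with only the first moment \eqref{eqn_pf_EP_summation} of Erd\H{o}s--Pomerance; no second-moment or Barban--Davenport--Halberstam input, and no uniform version of \eqref{eqn_EP}, is needed. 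The only price is the stronger restriction $u\le\log\log x$, which Lemma~\ref{choice_of_y} still accommodates. Your reduction to slabs is harmless but unnecessary: uniform continuity of $F$ absorbs the $o(1)$ shifts of $\psi$ directly.

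The genuine gap is in your dynamical half. You propose to replace $g(T^{\Omega(n)}x_0)$ by $\int_Xg\,d\mu$ by averaging over the dilations $n\mapsto 2^jn$, $0\le j<H$, and then applying unique ergodicity to $\tfrac1H\sum_{j<H}g(T^{\Omega(n)+j}x_0)$. This requires the transfer $\tfrac1N\sum_{n\le N}a(n)\approx\tfrac{2^j}{N}\sum_{n\le N/2^j}a(2^jn)$ with equal weight on each $j$, which is false already for $j=1$ (take $a=1_{2\mid n}$: the left side is $\tfrac12$, the right side is $1$), and cannot be repaired by the exact $2$-adic partition of $[N]$ either, since there the weight of the dilation class $j$ is $2^{-j-1}$, not $1/H$. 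Proposition~\ref{prop_BR_inequality} gives no help here: for $B=\{2,4,\dots,2^H\}$ the correlation term $\mathbb{E}^{\log}_{m\in B}\mathbb{E}^{\log}_{n\in B}\Phi(m,n)$ tends to $2$, not to $0$, so the inequality is vacuous. The entire point of the Bergelson--Richter machinery is that the averaging set must consist of nearly pairwise \emph{coprime} integers, and that one needs \emph{two} such sets $B_1\subset\P_1$, $B_2\subset\P_2$ with matching counts in the intervals $[\rho^j,\rho^{j+1})$ in order to compare $a(\Omega(n))$ with $a(\Omega(n)+1)$ (Lemmas~\ref{lem_BR_keylemma1} and~\ref{lem_BR_keylemma2}); were the powers-of-two averaging valid, it would give a one-line proof of the PNT in the form \eqref{eqn_Landau}. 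What the paper actually does is note that $\varphi(mn)=\varphi(m)\varphi(n)\gcd(m,n)/\varphi(\gcd(m,n))$ forces $\Omega(\varphi(mn))-\Omega(\varphi(n))$ to be bounded in terms of $m$ alone, so the perturbation Lemma~\ref{lem_keylem_EKPNT} and hence Theorem~\ref{thm_keythm_pmax} hold verbatim with $\psi$ built from $\Omega(\varphi(n))$; combined with \eqref{eqn_EP_pmax} and the unique-ergodicity argument for the empirical measures, this yields Theorem~\ref{mainthm_EPPNT}. You need to route your dynamical step through that two-set prime-averaging argument (or an equivalent), not through dilation by powers of $2$.
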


\begin{remark} Making similar adjustments, one can also get an analogue of Theorem~\ref{mainthm_EPPNT} for $\Omega(\varphi(n))$ replaced by $\omega_1(n)$, where $\omega_1(n)$ is the number of prime factors of $n$ with multiplicity $1$. The Erd\H{o}s-Kac type theorem for this function can be found in the work of  Elma and Liu \cite{ElmaLiu2021}. We leave the investigation of the analogues of Theorem~\ref{mainthm_EKPNT} for other Erd\H{o}s-Kac type theorems (e.g. \cite{BassilyKataiWijsmuller1997}) to the interested readers. 
\end{remark}

\begin{remark}
	In \cite{Wang2021}, the first author proved the analogues of some cases of \eqref{eqn_mainthm_EKPNT} for additive arithmetic semigroups arising from finite fields. We leave the investigation of the analogues of Theorems~\ref{mainthm_EK}-\ref{mainthm_EPPNT} to readers. 
\end{remark}

In Sect.~\ref{sec_preliminaries}, we collect the ingredients for the proof of Theorem~\ref{mainthm_EK}. Then in Sect.~\ref{sec_pf_mainthm_EK}, we prove Theorem~\ref{mainthm_EK} using the ideas in the work of Kural, McDonald and Sah \cite{KuralMcDonaldSah2020}. In Sect.~\ref{sec_pf_mainthm_EKPNT}, we recite Bergelson and Richter's technical lemmas and use them to prove Theorem~\ref{mainthm_EKPNT}. The proof of Theorem~\ref{mainthm_EPPNT} is similar. In Sect.~\ref{sec_pf_EPPNT}, we will clarify the necessary adjustments to make.

\

\textbf{Notation.} We write $1_P$ for the indicator function of the statement $P$. We write $f(x)=O(g(x))$ or $f(x)\ll g(x)$ if there exists some constant $C>0$ such that $\vert f(x)\vert \le C\vert g(x)\vert $ for all $x$. The implied constant $C$ may depend on some other parameters, but it does not depend on the variable $x$. We write $f(x)=o(g(x))$ if for any $\ve>0$ there exists some constant $N>0$ such that $\vert f(x)\vert \le\ve\vert g(x)\vert $ for all $x\ge N$. The letter $p$ always denotes a prime. The constant $c$ appearing in the $O$-terms is a positive constant that may vary from one line to the next.

\section{Nuts and bolts}
\label{sec_preliminaries}

In this section, we introduce the main ingredients that will be used in Sect.~\ref{sec_pf_mainthm_EK} for the proof of Theorem~\ref{mainthm_EK}.

\subsection{Natural density}
\label{naturaldensity}

Let $\cP$ be the set of all primes.  Let $S\subseteq \cP$ be a subset of primes. Let $$\pi_S(x)\colonequals\#\set{p\in S\colon p\le x},$$
and let $\pi(x)=\pi_{\cP}(x)$ be the prime counting function. We say that $S$ has a \textit{natural density} $\delta(S)$ if the following limit exits:
\[
\delta(S)\colonequals\lim_{x\to\infty} \frac{\pi_S(x)}{\pi(x)}.
\]
For example, if  $S=\set{p\in \cP\colon p\equiv a\mod{q}}$ with $(a,q)=1$, then by the Prime Number Theorem in arithmetic progressions we get that $\delta(S)=1/\varphi(q)$. We refer the readers to \cite{KuralMcDonaldSah2020} for more interesting examples. Suppose that $S$ has a natural density $\delta(S)$. By the Prime Number Theorem, we have $\pi_S(x)\sim \delta(S)\li(x)$, where $\li(x)\colonequals\int_2^x\frac{dt}{\log t}$. Let
$$e_S(x)\colonequals\sup_{y\le x}\vert \pi_S(y)-\delta(S)\li(y)\vert .$$
Then $e_S(x)$ is monotonically increasing and satisfies $e_S(x)=o(x/\log x)$. Now,  we define 
$$v_S(x)\colonequals\sup_{y\ge x}\frac{e_S(y)}{y}.$$
Then $v_S(x)$ is monotonically decreasing and $v_S(x)=o(1/\log x)$. Due to the ideas in \cite[Theorem~3.1]{KuralMcDonaldSah2020}, we have the following property for such $v_S(x)$, which will be used in the last step of the proof of  Theorem~\ref{mainthm_EK}. A discrete version of Lemma~\ref{choice_of_y} can be found in \cite[Lemma~4.5]{DuanWangYi2021}.

\begin{lemma}\label{choice_of_y}
	Suppose $v\colon (1,\infty)\to(0,\infty)$ is a  decreasing function such that  $v(x)\log x=o(1)$. Suppose $h\colon (1,\infty)\to(0,\infty)$ is a positive function such that $\lim_{x\to\infty} h(x)=\infty$, $h(x)=o(\log x)$. Then there exists a positive function $y=y(x)$ satisfying $\lim_{x\to\infty} y(x)=\infty$ such that $u=\log x/\log y\to \infty$, $u\le h(x)$, and $v(y(x))\log x=o(1)$,  as $x\to\infty$.	
\end{lemma}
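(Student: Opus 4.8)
The plan is to recast the problem in terms of a single slowly growing parameter. Write $\phi(t)=v(t)\log t$; the hypothesis on $v$ says exactly that $\phi(t)>0$ and $\phi(t)\to 0$ as $t\to\infty$. Looking for $y$ in the form $y=x^{1/u}$ with $u=u(x)=\log x/\log y(x)$, the identity
\[
v(y)\log x=v(y)\log y\cdot\frac{\log x}{\log y}=\phi(y)\,u
\]
shows that the conclusion of the lemma is equivalent to finding $u(x)$ with: $u(x)\to\infty$; $u(x)\le h(x)$; $u(x)=o(\log x)$ (which forces $y(x)=x^{1/u(x)}\to\infty$); and $\phi(x^{1/u(x)})\,u(x)\to 0$. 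The naive attempt $u(x)=h(x)$ fails for the last condition when $h$ grows quickly — for instance if $h(x)=\log x/\log\log\log x$ and $v(t)=1/(\log t\log\log t)$, then $x^{1/h(x)}=\log\log x$ and $v(x^{1/h(x)})\log x\to\infty$ — so $u$ must in general be taken strictly below $h(x)$, at a rate dictated by how fast $\phi$ decays; encoding that rate is the crux.

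Accordingly I would first manufacture a companion \emph{rate function} for $\phi$: with $\psi(t)=\sup_{s\ge t}\phi(s)$ (non-increasing, positive, tending to $0$) set
\[
M(t)\colonequals\min\!\big\{\psi(t)^{-1/2},\ \sqrt{\log t}\,\big\}.
\]
Then $M$ is non-decreasing, $M(t)\to\infty$, $M(t)\le\sqrt{\log t}$, and $\phi(t)M(t)\le\psi(t)^{1/2}\to 0$. The role of $M$ is this: if we choose $y(x)\to\infty$ with $u(x)\le M(y(x))$, then $\phi(x^{1/u(x)})\,u(x)=\phi(y(x))\,u(x)\le\phi(y(x))M(y(x))\to 0$ automatically.

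Next I would solve for the largest admissible $u$. For fixed large $x$ the map $u\mapsto M(x^{1/u})$ is non-increasing while $u\mapsto u$ is increasing, and $u\le M(x^{1/u})\le\sqrt{(\log x)/u}$ forces $u^3\le\log x$; hence $A_x\colonequals\{u\ge 1:u\le M(x^{1/u})\}$ is a bounded interval $[1,u^\ast(x)]$ (or $[1,u^\ast(x))$), nonempty since $M(x)\to\infty$. One checks $u^\ast(x)\to\infty$, because for any fixed $C$ the inequality $C\le M(x^{1/C})$ holds once $x$ is large enough, so $C\in A_x$ and $u^\ast(x)\ge C$. Now put $u(x)\colonequals\min\!\big\{\tfrac12 u^\ast(x),\,h(x)\big\}$ and $y(x)\colonequals x^{1/u(x)}$. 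Then $u(x)\le h(x)$ by construction; $u(x)\to\infty$ since both $\tfrac12 u^\ast(x)$ and $h(x)$ do; $u(x)<u^\ast(x)$ together with the monotonicity above gives $u(x)\le M(x^{1/u(x)})=M(y(x))$; and $u(x)\le\tfrac12(\log x)^{1/3}$ gives $\log y(x)=\log x/u(x)\to\infty$, so $y(x)\to\infty$. Feeding this into the identity of the first paragraph, $v(y(x))\log x=\phi(y(x))\,u(x)\le\phi(y(x))M(y(x))\to 0$. (On any bounded range of $x$ one may define $y(x)$ arbitrarily; the statement concerns only $x\to\infty$.)

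The one genuinely delicate point — more bookkeeping than obstacle — is the step "$u<u^\ast(x)\Rightarrow u\le M(x^{1/u})$" when $M$ is merely non-decreasing: this holds because $u^\ast(x)=\sup A_x$, so given $u<u^\ast(x)$ there is $u'\in A_x$ with $u'>u$, whence $u<u'\le M(x^{1/u'})\le M(x^{1/u})$ by monotonicity of $u\mapsto M(x^{1/u})$. Alternatively one may at the outset replace $M$ by a continuous, strictly increasing minorant enjoying the same three properties, after which $u^\ast(x)$ is literally the unique root of $u=M(x^{1/u})$ and the whole argument is transparent. Either way, no further ideas are needed.
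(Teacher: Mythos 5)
Your argument is correct; I checked the pieces and they all hold up: $\psi(t)=\sup_{s\ge t}\phi(s)$ is finite, positive, non-increasing and tends to $0$; $M$ is non-decreasing with $M(t)\to\infty$, $M(t)\le\sqrt{\log t}$ and $\phi(t)M(t)\le\psi(t)^{1/2}\to 0$; $A_x$ is a nonempty, downward-closed, bounded set with $u^\ast(x)=\sup A_x\to\infty$ and $u^\ast(x)\le(\log x)^{1/3}$; the supremum step "$u<u^\ast(x)\Rightarrow u\le M(x^{1/u})$" is handled correctly via the monotonicity of $u\mapsto M(x^{1/u})$; and the final chain $v(y)\log x=\phi(y)\,u\le\phi(y)M(y)\to 0$ is valid because $y(x)\to\infty$.

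The route is genuinely different from the paper's. The paper diagonalizes over a discrete parameter: for each integer $m$ it uses $v(x^{1/m})\log x\to 0$ to extract a threshold $C(m)$ with $v(x^{1/m})\log x<1/m$ for all $x>C(m)$, notes that $C(m)$ is increasing, and sets
\[
u=\beta(x)=\min\Bigl(\bigl\lfloor\sqrt{h(x)}\bigr\rfloor,\ \sup\{m\in\N: C(m)<x\}\Bigr),\qquad y(x)=x^{1/\beta(x)},
\]
so the smallness of $v(y)\log x$ is read off directly from the bound $1/\beta(x)$. You instead reduce to the identity $v(y)\log x=\phi(y)\,u$ with $\phi(t)=v(t)\log t$, manufacture an explicit continuous rate function $M$ controlling how large $u$ may be in terms of $y$, and solve the implicit inequality $u\le M(x^{1/u})$. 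Your version is quantitatively more explicit — it exhibits the admissible size of $u$ as a function of the decay rate of $\phi$, and your opening counterexample makes clear why the naive choice $u=h(x)$ can fail — at the cost of some bookkeeping around $\sup A_x$. The paper's version is shorter because all of the quantitative content is absorbed into the mere existence of the thresholds $C(m)$, and the cap $\lfloor\sqrt{h(x)}\rfloor$ plays the role your cap $\min\{\tfrac12 u^\ast(x),h(x)\}$ plays. Both deliver slightly more than the lemma asks for (the paper gets $u\le\sqrt{h(x)}$, you get $u\le\tfrac12(\log x)^{1/3}$), which is harmless.
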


\begin{proof} 
	Since $\lim_{x\to\infty}v(x)\log x=0$, we have $\lim_{x\to\infty}v(x^{\frac1m})\log x=0$ for any integer $m\ge1$. Then there	exists a minimum positive integer constant $C(m)$ such that
	\begin{equation}\label{limtrickeq}
		v(x^{\frac1m})\log x<\frac1m
	\end{equation}
	for all $x> C(m)$. 
	Then $C(m)$ increases as $m$ increases since $v(x^{\frac1m})$ is increasing with respect to  $m$.  Notice   that $\lim_{x\to\infty}\sup\set{m\in\N\colon C(m)<x}=\infty$, we can set
	\[
	\beta(x)=\min\left(\left\lfloor\sqrt{h(x)}\right\rfloor, \sup\set{m\in\N\colon C(m)<x}\right).
	\]
	Here $\lfloor x\rfloor\colonequals\sup\set{n\in\N\colon n\le x}$ is the integral part of $x$. 
	Then $\beta(x)\le h(x)$ for $x$ sufficiently large  and $\lim_{x\to\infty}\beta(x)=\infty$. Moreover, we have $x>C(\beta(x))$ for all $x>C(1)$. By \eqref{limtrickeq}, we have
	$$v(x^{\frac1{\beta(x)}})\log x<\frac1{\beta(x)}$$
	for all $x>C(1)$. It follows that $\lim_{x\to\infty}v(x^{\frac1{\beta(x)}})\log x=0$. Taking  $y(x)=x^{\frac1{\beta(x)}}$,  which means $u=\beta(x)$,  we have that $\log y(x) =\log x/\beta(x)\ge\log x/h(x)\to\infty$, as $x\to\infty$. This implies that $\lim_{x\to\infty} y(x)=\infty$. Thus, the function $y(x)=x^{\frac1{\beta(x)}}$ satisfies the desired properties. 
\end{proof}

\subsection{Divisor-bounded multiplicative functions}
\label{dbfcn}

In this subsection, we cite Granville and Koukoulopoulos's result  on the asymptotic formula for the partial sum of divisor-bounded multiplicative functions whose prime values are $\alpha>0$ on average. They proved it by using the Landau–Selberg–Delange (LSD) method.

\begin{theorem}[{\cite[Theorem~1]{GranvilleKoukoulopoulos2019}}]\label{gk2019}
	Let $f$ be a multiplicative function satisfying
	\begin{equation}\label{hypothesis}
		\sum_{p\le x}f(p)\log p=\alpha x+O\of{\frac{x}{(\log x)^N}}\qquad(x\ge2)
	\end{equation}
	for some $\alpha\in\C$ and some $N>0$ and such that $\vert f\vert \le d_k$ for some positive real number $k$. Then
	\begin{equation}\label{gk2019eq}
		\sum_{n\le x} f(n)=x \sum_{j=0}^J \tilde{c}_j  \frac{ (\log x)^{\alpha-j-1} } {\Gamma(\alpha-j)}
		+  O\Big(x (\log x)^{k-1-N}(\log\log x)^{1_{N=J+1}}\Big),
	\end{equation}
	where $J$ is the largest integer less than $N$, $F(s)=\sum_{n=1}^\infty{f(n)}{n^{-s}}$, and 
	$$\tilde{c}_j=\frac1{j!}\frac{\mathrm{d}^j}{\mathrm{d} s^j}\bigg\vert_{s=1}\frac{(s-1)^\alpha F(s)}{s}$$
	for $0\le j\le J$. The implied constant in \eqref{gk2019eq} depends at most on $\alpha$, $A$, and the implicit constant in \eqref{hypothesis}.
\end{theorem}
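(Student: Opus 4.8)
\textbf{Proof proposal for Theorem~\ref{mainthm_EPPNT}.}

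The plan is to mimic the proof of Theorem~\ref{mainthm_EKPNT} as described in Sect.~\ref{sec_pf_mainthm_EKPNT}, carefully tracking where the arithmetic function $\Omega(n)$ in the normalization is replaced by $\Omega(\varphi(n))$ while the dynamical argument $T^{\Omega(n)}x_0$ is kept unchanged. The starting point is the observation that Bergelson and Richter's technical lemmas reduce the evaluation of an average of the shape $\frac1N\sum_{n\le N} a_n\, g(T^{\Omega(n)}x_0)$ (with $a_n$ a bounded sequence) to understanding, for each fixed residue class, the Ces\`aro behavior of $a_n$ along integers with $\Omega(n)$ in a prescribed progression, together with the equidistribution of $\Omega(n)$ in progressions. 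Here we take $a_n = 1_{P^+(n)\in S}\cdot F\bigl(\tfrac{\Omega(\varphi(n)) - \frac12(\log\log N)^2}{\frac1{\sqrt3}(\log\log N)^{3/2}}\bigr)$, which is bounded since $F$ is compactly supported. Thus the dynamical input is \emph{formally identical} to the one used for Theorem~\ref{mainthm_EKPNT}, and the only thing to re-verify is the purely number-theoretic statement that, for every fixed $j$ and $q$,
\begin{equation*}
\frac1N\sum_{\substack{1\le n\le N\\ P^+(n)\in S\\ \Omega(n)\equiv j\,(\mathrm{mod}\,q)}} F\Bigl( \frac{\Omega(\varphi(n)) - \frac12(\log \log N)^2 }{\frac1{\sqrt3}(\log \log N)^{3/2}} \Bigr) \longrightarrow \frac{\delta(S)}{q}\cdot\frac{1}{\sqrt{2\pi}} \int_{-\infty}^{\infty} F(t) e^{-t^2/2} \, dt,
\end{equation*}
i.e.\ an Erd\H{o}s--Pomerance central limit theorem with the simultaneous constraints $P^+(n)\in S$ and $\Omega(n)\equiv j\,(\mathrm{mod}\,q)$.

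The first step is therefore to establish this joint Erd\H{o}s--Pomerance statement. I would proceed by the same Alladi-duality/Kural--McDonald--Sah mechanism used in Sect.~\ref{sec_pf_mainthm_EK}: write $1_{P^+(n)\in S}$ via a sum over the largest prime factor, split $n = P^+(n)\cdot m$ with $P^+(m)\le P^+(n)$, and use partial summation against $\pi_S$ and the error control provided by $v_S(x)$ from Lemma~\ref{choice_of_y}. The key point is that the statistic $\Omega(\varphi(n))$ is essentially insensitive to the single large prime $P^+(n)=p$: indeed $\Omega(\varphi(n)) = \Omega(\varphi(m)) + \Omega(p-1)$, and $\Omega(p-1)$ is of typical size $\log\log p$, which is negligible against the normalization $(\log\log N)^{3/2}$. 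So after peeling off $P^+(n)=p$ one is left with controlling $\Omega(\varphi(m))$ for $P^+(m)\le p$, for which the classical Erd\H{o}s--Pomerance machinery (the dominant contribution to $\Omega(\varphi(n))$ comes from $\sum_{p\mid n}\Omega(p-1)$, governed by a sum of roughly independent increments each behaving like $\log\log p$; see \cite{ErdosPomerance1985, BassilyKataiWijsmuller1997}) applies. The extra congruence condition $\Omega(n)\equiv j\,(\mathrm{mod}\,q)$ is handled exactly as in the treatment of Theorem~\ref{mainthm_EK}: it is decoupled from the Gaussian statistic because $\Omega(n)$ and $\Omega(\varphi(n))$ are ``independent'' in the relevant probabilistic sense --- $\Omega(n)$ fluctuates on the scale $\sqrt{\log\log N}$ while $\Omega(\varphi(n))$ fluctuates on the much larger scale $(\log\log N)^{3/2}$ --- so conditioning on a residue class of $\Omega(n)$ merely introduces the density factor $1/q$ without distorting the limiting normal law of $\Omega(\varphi(n))$.

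The second step is to feed this into Bergelson--Richter's lemmas. One decomposes $g$ against the dynamics, uses unique ergodicity to replace time averages of $g\circ T^{\Omega(n)}$ along progressions by $\int_X g\,d\mu$, and sums the resulting contributions over $j\bmod q$; letting $q\to\infty$ at the end (or invoking the lemma in the form already quoted from \cite{BergelsonRichter2020}) produces the clean product $\bigl(\int_X g\,d\mu\bigr)$ times the Erd\H{o}s--Pomerance Gaussian times $\delta(S)$. Since this part of the argument is verbatim the one used for Theorem~\ref{mainthm_EKPNT}, in the write-up I would only indicate the substitutions rather than reproduce it.

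The main obstacle I anticipate is the first step: proving the Erd\H{o}s--Pomerance central limit theorem \emph{with the restriction $P^+(n)\in S$} with enough uniformity to also carry the congruence $\Omega(n)\equiv j\,(\mathrm{mod}\,q)$. The subtlety is that $\varphi(n)$ mixes all prime factors of $n$ (through $\prod_{p\mid n}(p-1)$), so restricting the \emph{largest} prime factor of $n$ does constrain one of the summands $\Omega(p-1)$ in the decomposition of $\Omega(\varphi(n))$; one must show that this boundary term is genuinely negligible on the scale $(\log\log N)^{3/2}$ and that the Alladi-type duality does not destroy the near-independence of the increments $\{\Omega(p-1)\}_{p\mid m}$. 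I expect this to require combining the error bound $v_S(x)=o(1/\log x)$ with a Turán--Kubilius-type variance estimate for $\Omega(\varphi(n))$ restricted to $P^+(n)\le y$ and to a progression in $\Omega(n)$; once those two ingredients are in place, a standard method-of-moments or Lévy-continuity argument closes the gap, exactly paralleling the role of Theorem~\ref{gk2019} and Lemma~\ref{choice_of_y} in the proof of Theorem~\ref{mainthm_EK}.
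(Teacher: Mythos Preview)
Your proposal addresses the wrong statement. The theorem in question is Theorem~\ref{gk2019}, which is quoted from Granville--Koukoulopoulos \cite{GranvilleKoukoulopoulos2019} and is not proved in the paper at all; it is cited solely so that the consequence \eqref{mainterm} can be used as an input in Sect.~\ref{sec_pf_mainthm_EK}. What you have written is instead a sketch for Theorem~\ref{mainthm_EPPNT}.

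Even viewed as a proposal for Theorem~\ref{mainthm_EPPNT}, your route differs from the paper's in a nontrivial way. You reduce the dynamical statement to a joint Erd\H{o}s--Pomerance law under the two simultaneous constraints $P^+(n)\in S$ and $\Omega(n)\equiv j\pmod q$, and then propose to prove that joint law via a Tur\'an--Kubilius variance estimate. The paper avoids any congruence condition on $\Omega(n)$: it establishes $T$-invariance of the limiting measure directly via Theorem~\ref{thm_keythm_pmax} (built from Proposition~\ref{prop_BR_inequality} and Lemmas~\ref{lem_BR_keylemma1}--\ref{lem_BR_keylemma2}), and the only point to recheck is the analogue of \eqref{eqn_keyeqn2}, which follows at once from the elementary identity $\Omega(\varphi(mn))=\Omega(\varphi(n))+A_m$ with $A_m$ depending only on $m$ and its divisors. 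The restricted Erd\H{o}s--Pomerance input \eqref{eqn_EP_pmax} (with no congruence on $\Omega(n)$) is proved separately by adapting Sect.~\ref{sec_pf_mainthm_EK}, the new step being the control of the perturbation $\Omega(p-1)/(\log\log x)^{3/2}$ via the estimate $\sum_{p\le x}\Omega(p-1)/p=\tfrac12(\log\log x)^2+O(\log\log x)$ from \cite[Lemma~2.3]{ErdosPomerance1985}. So the ``main obstacle'' you flag---a uniform joint CLT over residue classes of $\Omega(n)$---is never actually needed.
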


We note that	$\tilde{c}_0=C_\alpha(f)\colonequals\prod_{p}(1-1/p)^\alpha\sum_{\nu\ge0}f(p^\nu)/p^\nu$. By Theorem~\ref{gk2019}, we get that
\begin{equation}\label{mainterm}
	\sum_{n\le x} f(n) \sim C_\alpha(f)x(\log x)^{\alpha-1}
\end{equation}
for the functions in Theorem~\ref{mainthm_EK}, as $x\to\infty$.

\subsection{Friable numbers}
\label{sectsn}

An integer $n$ is called a $y$-friable (or $y$-smooth) number if $P^+(n)\le y$.
Let $$\cS(x,y)\colonequals\set{n\in\N\colon 1\le n\le x, P^+(n)\le y}$$ be the set of $y$-friable numbers up to $x$. For any arithmetic function $f(n)$, we define
\begin{equation}\label{DefnofPsif}
	\Psi_f(x,y)\colonequals\sum_{n\in\cS(x,y)}f(n).
\end{equation}

Let $u\colonequals\log x/\log y$. Let $\varrho_\alpha(u)$ be the function  defined to be the unique continuous solution of the difference-differential equation $u\varrho_\alpha'(u)+(1-\alpha)\varrho_\alpha(u)+\alpha\varrho_\alpha(u-1)=0$ for $u>1$ with the initial condition $\varrho_\alpha(u)=u^{\alpha-1}/\Gamma(\alpha)$ for $0< u\le1$. When $\alpha=1$, $\varrho_\alpha(u)=\varrho_1(u)$ is the classical Dickman function. For a sensible function $f$,  one expects the following asymptotic estimate holds for $\Psi_f(x,y)$:
\begin{equation}\label{psi_eq}
	\Psi_f(x,y)\sim C_\alpha(f) x \varrho_\alpha(u) (\log y)^{\alpha -1} \quad \text{as } y\to\infty,
\end{equation}
where $C_\alpha(f)=\prod_{p}(1-1/p)^\alpha\sum_{\nu\ge0}f(p^\nu)/p^\nu$ as in \eqref{mainterm}. Tenenbaum and Wu \cite{TenenbaumWu2003} proved that Eq.~\eqref{psi_eq} holds for a class $\cM_\alpha$ of multiplicative functions. More precisely, they showed the following estimate.

\begin{theorem}[{\cite[Corollary~2.3]{TenenbaumWu2003}}]\label{twthm}
	For $\varepsilon>0$, let 
	$$L_\varepsilon(y)\colonequals\exp \left(\log(y)^{\frac{3}{5}-\varepsilon}\right),\quad
	H_\varepsilon\colonequals\set{(x, y)\colon x\ge 2, 1\le u \le L_\varepsilon(y)}.
	$$
	Then uniformly for $f\in \mathcal{M}_\alpha$ and $(x, y)\in H_\varepsilon$, we have that
	\begin{equation}\label{twthm_eq}
		\Psi_f(x, y)=C_\alpha(f) x \varrho_\alpha(u) (\log y)^{\alpha -1} \left\{1+O\left( \frac{\log(u+1)}{\log y}+\frac{1}{(\log y )^\alpha}  \right) \right\}.
	\end{equation}
\end{theorem}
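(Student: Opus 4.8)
Since Theorem~\ref{twthm} is quoted from \cite{TenenbaumWu2003}, we only indicate the strategy one would follow. The plan is to run the Hildebrand--Tenenbaum saddle-point method on the weighted friable sum $\Psi_f(x,y)$ and to isolate the Dickman-type factor $\varrho_\alpha(u)$ by reduction to the pure friable divisor problem. First one introduces the generating series over $y$-friable integers,
\[
F_y(s)\colonequals\sum_{P^+(n)\le y}\frac{f(n)}{n^s}=\prod_{p\le y}\of{\,\sum_{\nu\ge0}\frac{f(p^\nu)}{p^{\nu s}}},
\]
which converges absolutely for $\on{Re} s>0$ because $\vert f\vert\le d_k$; from it one recovers $\Psi_f(x,y)$ by a truncated Perron formula $\Psi_f(x,y)=\frac1{2\pi i}\int_{(\sigma)}F_y(s)\,x^s\,s^{-1}\,ds+(\text{error})$. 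The integral concentrates near the saddle point $\sigma=\sigma(x,y)\in(0,1]$, the solution of $-\partial_\sigma\of{\log F_y(\sigma)+\sigma\log x}=0$; throughout $H_\ve$ one has $\sigma=1-\xi(u)/\log y+\cdots$, so that $0\le 1-\sigma\ll(\log y)^{-2/5+\ve}$, with $\xi$ the usual Dickman saddle parameter.

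The structural input is the hypothesis $f\in\cM_\alpha$, which --- as is standard for this kind of statement --- quantifies how close $\sum_{p\le t}f(p)\log p$ is to $\alpha t$ and bounds $f$ on prime powers. One uses it to factor $F_y(s)=\zeta(s,y)^\alpha\,e^{G_y(s)}$, where $\zeta(s,y)\colonequals\prod_{p\le y}(1-p^{-s})^{-1}$ is analytic and zero-free for $\on{Re} s>0$, and (after excluding finitely many small primes, whose Euler factors are absorbed into the constant)
\[
G_y(s)=\sum_{p\le y}\of{\log\!\sum_{\nu\ge0}\frac{f(p^\nu)}{p^{\nu s}}+\alpha\log(1-p^{-s})}=\sum_{p\le y}\of{\frac{f(p)-\alpha}{p^s}+O\of{\frac1{p^{2\on{Re} s}}}}.
\]
Partial summation against $\sum_{p\le t}f(p)\log p=\alpha t+O(t/(\log t)^N)$, together with the prime-power bound controlling the terms $\nu\ge2$ (which for $p$ close to $y$ is handled by first splitting off $p\le\sqrt y$, where $\vert f\vert\le d_k$ is harmless), shows that $G_y$ is analytic and slowly varying in a fixed disc about $s=1$, with $G_y(\sigma)-G_y(1)\ll(1-\sigma)\vert G_y'(1)\vert+\cdots\ll\log(u+1)/\log y$ and $e^{G_y(1)}=\prod_{p\le y}(1-1/p)^\alpha\sum_{\nu\ge0}f(p^\nu)/p^\nu=C_\alpha(f)\bigl(1+O((\log y)^{-\alpha})\bigr)$. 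Pulling the slowly varying factor $e^{G_y(s)}\approx e^{G_y(1)}$ out of the saddle-point neighbourhood then reduces matters to $\frac1{2\pi i}\int\zeta(s,y)^\alpha x^s s^{-1}\,ds=\Psi_{d_\alpha}(x,y)$, for which the asymptotic $\Psi_{d_\alpha}(x,y)=C_\alpha(d_\alpha)\,x\,\varrho_\alpha(u)(\log y)^{\alpha-1}$, with an error of the shape appearing in \eqref{twthm_eq}, is the classical weighted analogue of the Dickman--de Bruijn estimate for $\Psi(x,y)$. The function $\varrho_\alpha$ enters because its Laplace transform is, up to elementary factors, the saddle-point value of $\zeta(s,y)^\alpha$; equivalently its equation $u\varrho_\alpha'(u)+(1-\alpha)\varrho_\alpha(u)+\alpha\varrho_\alpha(u-1)=0$ from Sect.~\ref{sectsn} mirrors the Chebyshev/Buchstab identity $\Psi_f(x,y)\log x=\int_1^x\Psi_f(t,y)\,t^{-1}\,dt+\sum_{p\le y}f(p)\log p\,\Psi_f(x/p,y)+R(x,y)$, with $R$ of lower order, satisfied by the sums themselves. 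An alternative to this Mellin route is to take the last identity as the starting point and induct on $\lceil u\rceil$, using the complex-analytic input only to seed the induction on an initial range such as $1\le u\le(\log y)^{1-\ve}$.

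The main obstacle, and the source of the restriction $u\le L_\ve(y)=\exp\bigl((\log y)^{3/5-\ve}\bigr)$, is the uniform control of $F_y(s)$ --- equivalently of $\zeta(s,y)$ --- once $\sigma$ is allowed to recede from $1$ by as much as $(\log y)^{-2/5+\ve}$: there the classical zero-free region no longer suffices and one must invoke the Vinogradov--Korobov bounds for $\zeta$ (via $\psi(x)-x\ll x\exp(-c(\log x)^{3/5-\ve})$), exactly as in the unweighted Dickman problem. A secondary difficulty is the uniformity over $f\in\cM_\alpha$: the fluctuation of $f(p)$ about $\alpha$ is too large to be removed by a finite convolution with $d_\alpha$ (the series $\sum_p\vert f(p)-\alpha\vert(\log p)/p$ need not converge), so it must be absorbed inside the saddle-point estimate through the slow variation of $G_y$ near $s=\sigma$; the prime powers $p^\nu$ with $\nu\ge2$ and $p$ near $y$, where $\vert f\vert\le d_k$ is weaker than a pointwise bound such as \eqref{eqn_EG2}, are what force the separate treatment of $p\le\sqrt y$ noted above. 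Past these points the argument is the standard, if lengthy, saddle-point bookkeeping.
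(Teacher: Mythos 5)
The paper does not prove this statement at all: it is imported verbatim as \cite[Corollary~2.3]{TenenbaumWu2003}, and the authors explicitly decline even to reproduce the definition of the class $\cM_\alpha$, using only the consequence recorded in Corollary~\ref{sn}. So there is no in-paper proof to compare against, and a citation (or at most a pointer to the method) is all that is expected here. Your sketch is a fair description of how such a result is established: the saddle-point analysis of $F_y(s)=\zeta(s,y)^\alpha e^{G_y(s)}$, the identification of $\varrho_\alpha$ through its Laplace transform or the difference-differential equation, the role of the Vinogradov--Korobov region in producing the exponent $\tfrac35-\ve$ in $L_\ve(y)$, and the need to absorb the fluctuation of $f(p)-\alpha$ inside the estimate rather than by a finite convolution are all genuine features of the problem. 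Two remarks. First, Tenenbaum and Wu's actual argument is closer to what you relegate to an ``alternative'': they work from the Chebyshev/Buchstab-type integral equation for $\Psi_f(x,y)$ and propagate the asymptotic by induction on $u$, seeding the induction with the Landau--Selberg--Delange method in the range of bounded $u$; the direct Perron/saddle-point evaluation you describe first is the template of Hildebrand--Tenenbaum for $\Psi(x,y)$ itself, and adapting it uniformly over $\cM_\alpha$ is not how the cited proof is organized. Second, your closing identity $e^{G_y(1)}=C_\alpha(f)\bigl(1+O((\log y)^{-\alpha})\bigr)$ conflates the truncated product over $p\le y$ with the full product defining $C_\alpha(f)$; the tail over $p>y$ must be estimated from the hypothesis on $\sum_{p\le t}f(p)\log p$, and the $(\log y)^{-\alpha}$ term in \eqref{twthm_eq} does not arise from that truncation. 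Neither point is a gap in the sense relevant here, since the statement is a quoted black box, but if you intend the sketch as documentation you should flag that it reconstructs a plausible proof rather than the one in the reference.
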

The definition of $\cM_\alpha$ in Theorem~\ref{twthm} is a little involved, and we refer the readers to \cite{TenenbaumWu2003} for the precise description. 
Instead we will only use the fact that the class $\cM_\alpha$ contains the divisor-bounded multiplicative functions $f$ as in Theorem~\ref{mainthm_EK}. By \cite[Lemma~1(i)]{Song2002}, we have $\varrho_\alpha(u)\ll \exp(-\tfrac12u\log u)$ for $u\ge1$.  Notice that if $1\le u\le \log x/(\log\log x)^2$, then $(x,y)\in H_\ve$ for some $\ve>0$. Thus, we have the following estimate for the functions $f$ as in Theorem~\ref{mainthm_EK}.

\begin{corollary}\label{sn}
	Let $f$ be as in Theorem~\ref{mainthm_EK}, then we have that
	\begin{equation}\label{sn_eq}
		\Psi_{f}(x,y)\ll x(\log y)^{\alpha-1}\exp(-\tfrac12u\log u)
	\end{equation}
	holds uniformly for $1\le u\le \log x/(\log\log x)^2$.
\end{corollary}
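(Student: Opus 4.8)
The plan is to obtain the bound directly from the Tenenbaum--Wu estimate (Theorem~\ref{twthm}) combined with the super-exponential decay of $\varrho_\alpha$. First I would note that every weight $f$ as in Theorem~\ref{mainthm_EK} belongs to the class $\cM_\alpha$: it is multiplicative, non-negative and divisor-bounded, its prime values are identically $\alpha$ (so the average condition $\sum_{p\le x}f(p)\log p=\alpha x+O_A(x/(\log x)^A)$ holds for every $A$ by the Prime Number Theorem), and the local bound $f(p^i)\le d_k(p^i)\ll_r r^i$ holds for any $r>1$; these are precisely the hypotheses under which Tenenbaum and Wu place $f$ in $\cM_\alpha$, as recorded after Theorem~\ref{twthm}. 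The constant $C_\alpha(f)=\prod_p(1-1/p)^\alpha\sum_{\nu\ge0}f(p^\nu)/p^\nu$ is then a fixed positive number (convergence guaranteed by the divisor bound), which I will absorb into the implied constant.

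Next I would verify that the range $1\le u\le \log x/(\log\log x)^2$ sits inside the admissible region $H_\ve$ for a suitable fixed $\ve>0$, so that Theorem~\ref{twthm} may be invoked. For $u$ in this range one has $\log y=\log x/u\ge(\log\log x)^2$, hence $(\log y)^{3/5-\ve}\ge(\log\log x)^{6/5-2\ve}\ge\log\log x$ once $\ve\le 1/10$ and $x$ is large, so $L_\ve(y)=\exp((\log y)^{3/5-\ve})\ge\log x\ge u$, giving $(x,y)\in H_\ve$. Applying Theorem~\ref{twthm} then yields $\Psi_f(x,y)=C_\alpha(f)\,x\,\varrho_\alpha(u)(\log y)^{\alpha-1}\{1+O(\log(u+1)/\log y+1/(\log y)^\alpha)\}$; and in the same range $\log(u+1)\ll\log\log x$ while $\log y\ge(\log\log x)^2$, so the curly bracket is $1+o(1)=O(1)$. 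Finally, feeding in $\varrho_\alpha(u)\ll\exp(-\tfrac12 u\log u)$ for $u\ge1$ from \cite[Lemma~1(i)]{Song2002} and collecting constants produces the claimed uniform bound $\Psi_f(x,y)\ll x(\log y)^{\alpha-1}\exp(-\tfrac12 u\log u)$; for the remaining bounded range of $x$ the inequality is trivial after enlarging the implied constant.

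The only genuinely delicate step is checking that the stated $u$-range lies in $H_\ve$ and that the Tenenbaum--Wu secondary term is $O(1)$ there, and both reduce to the elementary inequality $\log y\ge(\log\log x)^2$. Everything else is bookkeeping, so I expect the corollary to follow by a short assembly once the membership $f\in\cM_\alpha$ and this range check are in place.
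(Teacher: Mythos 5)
Your proposal is correct and follows the paper's own route exactly: the paper likewise deduces the corollary by asserting $f\in\cM_\alpha$, noting that $1\le u\le \log x/(\log\log x)^2$ forces $(x,y)\in H_\ve$, and then combining Theorem~\ref{twthm} with the bound $\varrho_\alpha(u)\ll\exp(-\tfrac12u\log u)$ from Song's Lemma~1(i). Your explicit verification that $\log y\ge(\log\log x)^2$ puts $(x,y)$ in $H_\ve$ and makes the secondary term $O(1)$ is a welcome filling-in of details the paper leaves implicit.
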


\subsection{Largest prime factors}
In this subsection, we cite one of Ivi\'c and Pomerance's results on the largest prime factors $P^+(n)$ of integers $n$. From the following theorem, one can see that $P^+(n)\vert \vert n$ holds for almost all integers $n$. Here, $P^+(n)\vert \vert n$ means that $P^+(n)^2\nmid n$.

\begin{theorem}[{\cite[Theorem~(1.7)]{IvicPomerance1984}}]\label{lpf} 
	For any real number $r>-1$, we have that
	\begin{equation}
		\sum_{\substack{1\le n\le x\\P^+(n)^2\vert n}}\frac1{P^+(n)^r}=
		x\exp\Big\{-(2r+2)^{\frac12}(\log x\log_2 x)^{\frac12}\Big(1+g_r(x)+O\Big(\big(\frac{\log_3 x}{\log_2 x}\big)^3\Big)\Big)\Big\},
	\end{equation}
	where $\log_kx=\log(\log_{k-1}x)$ is the $k$-fold iterated natural logarithm of $x$, and  
	$$g_r(x)=\frac{\log_3x+\log(1+r)-2-\log2}{2\log_2x}\Big(1+\frac2{\log_2x}\Big)-\frac{\big(\log_3x+\log(1+r)-2\big)^2}{8(\log_2x)^2}.$$
\end{theorem}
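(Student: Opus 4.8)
#Proof Proposal for Theorem~\ref{mainthm_EPPNT}

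The plan is to follow the strategy used for Theorem~\ref{mainthm_EKPNT} in Sect.~\ref{sec_pf_mainthm_EKPNT}, replacing the Erd\H{o}s-Kac input by the Erd\H{o}s-Pomerance input \eqref{eqn_EP}, and identifying the arithmetic ingredient that must be re-proved along the way. The key structural fact from Bergelson and Richter's machinery is that a dynamical average of the form $\frac1N\sum_{n\le N} a_n\, g(T^{\Omega(n)}x_0)$ can be handled, for uniquely ergodic $(X,\mu,T)$, once one controls the ordinary averages $\frac1N\sum_{n\le N} a_n\, e(t\,\Omega(n))$ for the relevant weights $a_n$ (or, equivalently, once one knows a suitable ``orthogonality to $\Omega$ twisted by the constraint $P^+(n)\in S$'' statement together with the limiting distribution of the main statistic). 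Here $a_n = 1_{P^+(n)\in S}\, F\!\big( (\Omega(\varphi(n)) - \tfrac12(\log\log N)^2)/(\tfrac1{\sqrt3}(\log\log N)^{3/2}) \big)$. So the first step is to recall verbatim the technical lemmas of \cite{BergelsonRichter2020} cited in Sect.~\ref{sec_pf_mainthm_EKPNT} and observe that they apply with $\Omega$ in the exponent unchanged; only the scalar weight is different.

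The second and main step is to prove the arithmetic input: for $F\in C_c(\R)$ and $S$ of natural density $\delta(S)$,
\begin{equation}\label{eqn_EP_restricted}
	\lim_{N\to\infty}\frac1N\sum_{\substack{1\le n\le N\\ P^+(n)\in S}} F\Big( \frac{\Omega(\varphi(n)) - \tfrac12(\log\log N)^2}{\tfrac1{\sqrt3}(\log\log N)^{3/2}} \Big)\, e^{2\pi i t\,\Omega(n)} = \delta(S)\cdot \widehat{G}(t)\cdot \beta(t)
\end{equation}
for each fixed $t$, where $\widehat G(t)$ is the Gaussian integral against the appropriate character and $\beta(t)$ is the Euler-product factor $\prod_p(1-1/p)(1-e^{2\pi i t}/p)^{-1}$ coming from twisting $1$ by $e(t\Omega(\cdot))$; exactly these ingredients appear, with $\Omega(n)$ in place of $\Omega(\varphi(n))$, in the proof of Theorem~\ref{mainthm_EKPNT}. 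The route to \eqref{eqn_EP_restricted} is the Kural--McDonald--Sah argument in the form already used for Theorem~\ref{mainthm_EK}: write $n = m P^+(n)^j$ with $P^+(m) < P^+(n)$, use Theorem~\ref{lpf} to discard the negligible contribution of $j\ge2$ (i.e.\ $P^+(n)^2\mid n$), and for the $j=1$ case sum first over the top prime $p\in S$ in a dyadic-type range and then over the $y$-friable cofactor $m$ with $P^+(m)<p$. The friable sum is controlled by Corollary~\ref{sn} (applied to the multiplicative weight $e(t\Omega(\cdot))$, which is divisor-bounded with $k=1$ and has $f(p)=e(t)$ on primes, hence lies in $\cM_{e(t)}$), while the distributional factor $F(\cdots)$ is handled by the Erd\H{o}s-Pomerance theorem \eqref{eqn_EP}: one must check that adjoining a single prime $p\le N$ to $m$ changes $\Omega(\varphi(n)) = \Omega(\varphi(m)) + \Omega(p-1)$ by a quantity that is, on average over $p$, of size $\log\log N$ with fluctuations $O((\log\log N)^{1/2})$, negligible against the normalizing scale $(\log\log N)^{3/2}$; this is precisely the heuristic underlying \eqref{eqn_EP} and is the adjustment referred to in the statement ``making some adjustments in the proof of Theorem~\ref{mainthm_EKPNT}''. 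Partial summation over $p$ against $\pi_S(t) = \delta(S)\li(t) + O(e_S(t))$, together with the decay estimate for $v_S$ furnished by Lemma~\ref{choice_of_y} in the final passage to the limit, yields the factor $\delta(S)$ and completes \eqref{eqn_EP_restricted}.

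The third step feeds \eqref{eqn_EP_restricted} into the Bergelson--Richter lemmas exactly as in the proof of Theorem~\ref{mainthm_EKPNT}: one decomposes $g\in C(X)$ via its Fourier expansion (or uses the Weyl-type equidistribution criterion from \cite{BergelsonRichter2020}), so that $g(T^{\Omega(n)}x_0)$ is approximated by trigonometric polynomials in $\Omega(n)$; the $t=0$ term of \eqref{eqn_EP_restricted} (which is \eqref{eqn_mainthm_EKPNT}'s Erd\H{o}s-Pomerance analogue, i.e.\ a refinement of \eqref{eqn_EP} over $P^+(n)\in S$) produces $\delta(S)\,\widehat F_{\mathrm{Gauss}}\cdot\int_X g\,d\mu$, and the $t\ne0$ terms vanish in the limit because $\beta(t)\widehat G(t)$ contributes only through the integral $\int_X g\,d\mu$ in the ergodic average while the oscillation $e(t\Omega(n))$ kills everything else. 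Summing the approximation error and letting the trigonometric-polynomial degree grow gives \eqref{eqn_mainthm_EPPNT}.

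I expect the main obstacle to be verifying that the weight $F\!\big((\Omega(\varphi(n))-\tfrac12(\log\log N)^2)/(\tfrac1{\sqrt3}(\log\log N)^{3/2})\big)$ interacts cleanly with the decomposition $n = mp$, $p = P^+(n)\in S$: unlike the Erd\H{o}s-Kac case where $\Omega(n)=\Omega(m)+1$ is a trivial shift, here $\Omega(\varphi(n))=\Omega(\varphi(m))+\Omega(p-1)$ and $\Omega(p-1)$ is itself a genuinely fluctuating quantity of average order $\log\log p$. The crux is therefore a uniform (in the friable cofactor $m$) version of \eqref{eqn_EP}-type equidistribution for $\Omega(\varphi(mp))$ as $p$ ranges over $S\cap(y, N]$ with $P^+(m)\le y$ --- one needs that replacing the full Erd\H{o}s--Pomerance count by its restriction to a fixed squarefree, $y$-friable $m$ and top prime $p\in S$ does not disturb the Gaussian limit, which ultimately rests on the fact that $\Omega(p-1)$ satisfies its own central limit theorem and is essentially independent of $\Omega(\varphi(m))$ for $m$ friable. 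Making this independence quantitative, and checking it survives the weight $e(t\Omega(n)) = e(t\Omega(m))e(t)$, is where the real work lies; everything else is a transcription of the arguments already in place for Theorems~\ref{mainthm_EK} and~\ref{mainthm_EKPNT}.
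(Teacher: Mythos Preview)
Your proposal targets the wrong theorem. The statement you were given is Theorem~\ref{lpf}, the Ivi\'c--Pomerance asymptotic for $\sum_{P^+(n)^2\mid n} P^+(n)^{-r}$, which in this paper is merely \emph{cited} from \cite{IvicPomerance1984} and not proved; there is no ``paper's own proof'' to compare against. Instead you have written an outline for Theorem~\ref{mainthm_EPPNT}, an entirely different result. If the intent was genuinely Theorem~\ref{lpf}, your proposal says nothing about it: that theorem is a saddle-point/Rankin-type estimate for a sum over integers whose top prime repeats, and none of the Kural--McDonald--Sah, Bergelson--Richter, or Erd\H{o}s--Pomerance ingredients you invoke bear on establishing it.

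If your actual target was Theorem~\ref{mainthm_EPPNT} and the header mismatch is accidental, then your route still diverges from the paper's and has a real gap. The paper does \emph{not} pass through characters $e(2\pi i t\,\Omega(n))$ or any Euler product ``$\beta(t)$''; it applies the Bergelson--Richter machinery (Proposition~\ref{prop_BR_inequality}, Lemmas~\ref{lem_BR_keylemma1}--\ref{lem_BR_keylemma2}) directly, reducing everything to a $T$-invariance statement via the analogues of Lemma~\ref{lem_keylem_EKPNT} and Theorem~\ref{thm_keythm_pmax}. The only new arithmetic inputs the paper needs are (i) that $\Omega(\varphi(mn))-\Omega(\varphi(n))$ depends only on $m$ and the divisors of $m$, so the translation lemma carries over verbatim, and (ii) control of $\sum_{y\le p\le x}\Omega(p-1)/p$ via \cite[Lemma~2.3]{ErdosPomerance1985} to absorb the shift $\Omega(p-1)$ in the analogue of \eqref{pf_s2}. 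Your approach instead asks for the twisted limit \eqref{eqn_EP_restricted}, which would require Corollary~\ref{sn} for the complex weight $e(t\Omega(\cdot))$ --- but that corollary is stated only for non-negative $f$ as in Theorem~\ref{mainthm_EK}, and invoking $\cM_{e(t)}$ with complex $\alpha$ is not what the paper has available. More seriously, unique ergodicity of $(X,\mu,T)$ does not in general let you ``decompose $g\in C(X)$ via its Fourier expansion'' so that $g(T^{\Omega(n)}x_0)$ becomes a trigonometric polynomial in $\Omega(n)$; on an abstract compact system there is no such Fourier structure, so your third step does not go through as written.
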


Taking $r=0$ in Theorem~\ref{lpf} and then using the Cauchy-Schwarz inequality, we get the following estimate.

\begin{corollary}\label{error_term_sq}
	Let $f$ be an  arithmetic function such that
	\begin{equation}\label{error_term_sq_h}
		\sum_{1\le n\le x}\vert f(n)\vert ^2\ll x(\log x)^B
	\end{equation}
	for some positive constant $B$, then
	\begin{equation}\label{error_term_sq_eq}
		\sum_{\substack{1\le n\le x\\P^+(n)^2\vert n}}f(n)=O\of{
			x\exp\Big\{-c(\log x\log\log x)^{\frac12}\Big\}}
	\end{equation}
	for some positive constant $c=c(B)>0$.
\end{corollary}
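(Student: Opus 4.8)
The plan is to deduce Corollary~\ref{error_term_sq} from Theorem~\ref{lpf} with $r=0$ by a single application of the Cauchy--Schwarz inequality, splitting the weight $f(n)$ off the indicator of the (rare) event $P^+(n)^2\mid n$. Write
\[
\sum_{\substack{1\le n\le x\\P^+(n)^2\mid n}}f(n)
=\sum_{1\le n\le x}f(n)\,1_{P^+(n)^2\mid n},
\]
and apply Cauchy--Schwarz to the two factors $f(n)$ and $1_{P^+(n)^2\mid n}$, obtaining the bound
\[
\Big(\sum_{1\le n\le x}\vert f(n)\vert^2\Big)^{1/2}\Big(\sum_{\substack{1\le n\le x\\P^+(n)^2\mid n}}1\Big)^{1/2}.
\]
The first factor is $\ll x^{1/2}(\log x)^{B/2}$ by the hypothesis \eqref{error_term_sq_h}. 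The second factor is the $r=0$ case of Theorem~\ref{lpf}: $\sum_{n\le x,\,P^+(n)^2\mid n}1 = x\exp\{-2(\log x\log_2 x)^{1/2}(1+o(1))\}$, so its square root is $x^{1/2}\exp\{-(\log x\log_2 x)^{1/2}(1+o(1))\}$.

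Multiplying the two estimates gives
\[
\sum_{\substack{1\le n\le x\\P^+(n)^2\mid n}}f(n)\ll x\,(\log x)^{B/2}\exp\big\{-(\log x\log\log x)^{1/2}(1+o(1))\big\}.
\]
The only remaining point is cosmetic: the polynomial factor $(\log x)^{B/2}$ is absorbed into the exponential. Indeed $(\log x)^{B/2}=\exp\big(\tfrac{B}{2}\log\log x\big)$, and since $\log\log x = o\big((\log x\log\log x)^{1/2}\big)$, for any $c$ strictly smaller than the constant $1$ appearing above we have $(\log x)^{B/2}\exp\{-(1+o(1))(\log x\log\log x)^{1/2}\}\ll \exp\{-c(\log x\log\log x)^{1/2}\}$ once $x$ is large, with $c=c(B)>0$ (one may simply take, say, $c=1/2$, or any constant in $(0,1)$, uniformly in $B$, but allowing dependence on $B$ makes the absorption entirely trivial). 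This yields \eqref{error_term_sq_eq}.

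There is essentially no obstacle here; the statement is a routine packaging of Theorem~\ref{lpf}. The one thing to be careful about is that Cauchy--Schwarz is applied in the correct direction — bounding a sum over the sparse set by $(\text{second moment of }f)^{1/2}$ times $(\text{cardinality of the sparse set})^{1/2}$ — and that the error exponent from Theorem~\ref{lpf}, namely $\exp\{-(2r+2)^{1/2}(\log x\log_2 x)^{1/2}(1+g_r(x)+O((\log_3 x/\log_2 x)^3))\}$ at $r=0$, does give a genuine saving of the shape $\exp\{-c(\log x\log\log x)^{1/2}\}$; since $g_0(x)\to 0$ and the $O$-term tends to $0$, the factor $(2\cdot 0+2)^{1/2}=\sqrt2$ times $(1+o(1))$ stays bounded below by, say, $1$, so halving for the square root and absorbing the polynomial loss leaves a positive constant $c$. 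Finally one notes that $f$ is only assumed to satisfy the second-moment bound \eqref{error_term_sq_h} and need not be multiplicative, which is exactly the generality in which the corollary will later be invoked (with $f(n)=F((\Omega(n)-\alpha\log\log x)/\sqrt{\alpha\log\log x})\cdot(\text{divisor-bounded weight})$, whose square is divisor-bounded, hence \eqref{error_term_sq_h} holds).
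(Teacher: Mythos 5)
Your argument is exactly the paper's (the paper states the corollary with only the one-line indication ``take $r=0$ in Theorem~\ref{lpf} and apply Cauchy--Schwarz''), and your execution is correct: the second-moment hypothesis controls one factor, the Ivi\'c--Pomerance count of $n$ with $P^+(n)^2\mid n$ controls the other, and the polynomial loss $(\log x)^{B/2}$ is absorbed since $\log\log x=o\big((\log x\log\log x)^{1/2}\big)$. The only blemish is that you first write the constant in the exponent of the $r=0$ count as $2$ rather than $(2\cdot 0+2)^{1/2}=\sqrt2$ (which you yourself correct later); this does not affect the conclusion, since only the positivity of $c$ matters.
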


\section{Proof of Theorem~\ref{mainthm_EK}}
\label{sec_pf_mainthm_EK}

In this section, we use the ideas in the work of Alladi \cite[Theorem~1]{Alladi1977} and Kural et al.  \cite[Theorem~3.1]{KuralMcDonaldSah2020} to prove Theorem~\ref{mainthm_EK}. Put $\psi(n)\colonequals\frac{\Omega(n) - \alpha\log \log x }{\sqrt{\alpha\log \log x}}$ and $h(n)=f(n)F(\psi(n))$ so that 
\[
\sum_{\substack{1\le n\le x \\ P^+(n)\in S}}h(n)=\sum_{\substack{1\le n\le x \\ P^+(n)\in S}} f(n)F\Big( \frac{\Omega(n) - \alpha\log \log x }{\sqrt{\alpha\log \log x}}\Big) .
\]
By the weighted Erd\H{o}s-Kac Theorem~\eqref{eqn_EK_weighted} and the asymptotic estimate \eqref{mainterm}, to prove \eqref{eqn_mainthm_EK}, it suffices to prove that
\begin{equation}\label{eqn_pf_EK}
	\sum_{\substack{1\le n\le x \\ P^+(n)\in S}}h(n)=\delta(S)\sum_{1\le n\le x }h(n)+o(x(\log x)^{\alpha-1}).
\end{equation}

\begin{proof}[Proof of \eqref{eqn_pf_EK}]
	First, we break up the  sum  into two parts. One is restricted over $P^+(n)\vert \vert n$, and the other is restricted  over $P^+(n)^2\vert n$. Notice that $f(n)$  is  a divisor-bounded function, so is $h(n)$ due to the fact that $F$ is a bounded function. Since \eqref{error_term_sq_h} holds for divisor-bounded functions, by Corollary~\ref{error_term_sq}, we get that
	\begin{equation}\label{pf_eq}
		\sum_{\substack{1\le n\le x \\ P^+(n)\in S}}h(n)=\sum_{\substack{1\le n\le x \\ P^+(n)\in S, P^+(n)\vert \vert n }}h(n)+O\of{
			x\exp\Big\{-c(\log x\log\log x)^{\frac12}\Big\}}.
	\end{equation}
	
	For the first term on the right-hand side of \eqref{pf_eq},  we break up it into two parts:
	\begin{align}\label{pf_s1+s2}
		&\sum_{\substack{1\le n\le x \\ P^+(n)\in S, P^+(n)\vert \vert n }}h(n)\nonumber\\
		&=\sum_{\substack{1\le n\le x \\ P^+(n)\in S, P^+(n)\vert \vert n,  P^+(n)<y}}h(n)+\sum_{\substack{1\le n\le x \\ P^+(n)\in S, P^+(n)\vert \vert n, P^+(n)\ge y}}h(n)\nonumber\\
		&\colonequals S_1+S_2,
	\end{align}
	where $y=x^{1/u}$ is to be determined until the end. That is, $u=\log x/\log y$ as in Sect.~\ref{sectsn}.
	
	Clearly, $\vert S_1\vert \ll \Psi_{f}(x,y)$ by the definition of $\Psi_f$ in Sect.~\ref{sectsn}. By Corollary~\ref{sn}, we have
	\begin{equation}\label{pf_s1}
		S_1\ll x(\log y)^{\alpha-1}\exp(-\tfrac12u\log u)
	\end{equation}
	for $1\le u\le \log x/(\log\log x)^2$.
	
	As regards $S_2$, we write it  as follows:
	\begin{align}\label{pf_s2}
		S_2&=\sum_{\substack{y\le p\le x\\ p\in S}}\sum_{\substack{1\le n\le x \\ P^+(n)=p, p\vert \vert n}}h(n)\nonumber\\
		&=\sum_{\substack{y\le p\le x\\ p\in S}} \sum_{\substack{1\le n\le x/p \\ P^+(n)<p}}h(np)\nonumber\\
		&=\sum_{\substack{y\le p\le x\\ p\in S}}f(p)\sum_{\substack{1\le n\le x/p \\ P^+(n)<p}}f(n)F\left(\psi(n)+\frac1{\sqrt{\alpha\log\log x}}\right).
	\end{align}
	
	Since $F$ is uniformly continuous on $\R$,  the following estimate 
	\begin{equation}\label{eqn_keyeqn1}
		F\left(\psi(n)+\frac1{\sqrt{\alpha\log\log x}}\right)=F(\psi(n))+o(1)
	\end{equation}
	holds uniformly for $n$ as $x\to\infty$. Since $f(p)=\alpha$, it follows that
	\begin{equation}\label{eqn_pf_s2}
		S_2=\alpha \sum_{\substack{y\le p\le x\\ p\in S}} \sum_{\substack{1\le n\le x/p \\ P^+(n)<p}}h(n)+o(1)\cdot \sum_{\substack{y\le p\le x\\ p\in S}} \sum_{\substack{1\le n\le x/p \\ P^+(n)<p}}f(n).
	\end{equation}
	
	Using the definition \eqref{DefnofPsif} of $\Psi_h$ and by Corollary~\ref{error_term_sq}, we can write the double summations in the first term of \eqref{eqn_pf_s2} as
	\begin{equation}\label{eqn_pf_s2_first_term}
		\sum_{\substack{y\le p\le x\\ p\in S}} \sum_{\substack{1\le n\le x/p \\ P^+(n)<p}}h(n)=\sum_{\substack{y\le p\le x\\ p\in S}}\Psi_h\of{\frac{x}p,p}+O\of{
			x\exp\Big\{-c(\log x\log\log x)^{\frac12}\Big\}}.
	\end{equation}
	Notice that the double summations in the second term of \eqref{eqn_pf_s2} is bounded by
	$$\sum_{\substack{y\le p\le x\\ p\in S}} \sum_{\substack{1\le n\le x/p \\ P^+(n)<p}}f(n)\le \sum_{1\le n\le x}f(n)\ll x(\log x)^{\alpha-1}.$$
	Thus, $S_2$ can be written in terms of $\Psi_h(x,y)$ as follows
	\begin{equation}\label{eqn_pf_s2_err}
		S_2=\alpha\sum_{\substack{y\le p\le x\\ p\in S}}\Psi_h\of{\frac{x}p,p}+o(x(\log x)^{\alpha-1}).
	\end{equation}
	
	Now, to separate $\delta(S)$ out of $S_2$, we rewrite the summation in \eqref{eqn_pf_s2_err} as
	\begin{equation}\label{pf_s2-2}
		\sum_{\substack{y\le p\le x\\ p\in S}}\Psi_h\of{\frac{x}p,p}=\delta(S)\int_{y}^x \Psi_h\of{\frac{x}t,t}\frac{dt}{\log t}+S_3,
	\end{equation}
	where
	$$S_3\colonequals\sum_{\substack{y\le p\le x\\ p\in S}}\Psi_h\of{\frac{x}p,p}-\delta(S)\int_{y}^x \Psi_h\of{\frac{x}t,t}\frac{dt}{\log t}.$$
	Here, $S_3$ is an error term. To estimate it, we expand out the $\Psi_h$'s by the definition of $\Psi_h$ and then switch the order of summation and integration:
	\begin{align}\label{pf_s3}
		S_3&=\sum_{\substack{y\le p\le x\\ p\in S}}\sum_{\substack{1\le n\le x/p \\ P^+(n)\le  p}}h(n)-\delta(S)\int_{y}^x\Big(\sum_{\substack{1\le n\le x/t\\P^+(n)\le  t}}h(n) \Big)\frac{dt}{\log t}\nonumber\\
		&=\sum_{\substack{1\le n\le x/y\\ P^+(n)\le  x/n}}h(n)\Bigg(\sum_{\substack{P^+(n)\le  p\le x/n\\ p\ge y, p\in S}}1-\delta(S)\int_{\max\set{P^+(n),y}}^{x/n}\frac{dt}{\log t}\Bigg)\nonumber\\
		&=\sum_{\substack{1\le n\le x/y\\ P^+(n)\le  x/n}}h(n)\Bigg(\pi_S\Big(\frac{x}{n}\Big)-\pi_S(\max\set{P^+(n),y})+O(1)\nonumber\\
		&\hspace{3cm}-\delta(S)\li\of{\frac{x}{n}}+\delta(S)\li(\max\set{P^+(n),y})\Bigg)\nonumber\\
		&\ll\sum_{\substack{1\le n\le x/y\\ P^+(n)\le  x/n}}f(n)\Big(e_S\Big(\frac{x}{n}\Big)+e_S(\max\set{P^+(n),y})+O(1)\Big).
	\end{align}
	
	In the last line of \eqref{pf_s3}, we used two facts: (i) $h(n)\ll f(n)$; (ii) $e_S(\max\set{P^+(n),y})\le e_S(x/n)$. The fact (ii) holds since $P^+(n)\le x/n$, $y\le x/n$, and $e_S$ is increasing. Dropping off the restriction $P^+(n)\le x/n$ under the summation in \eqref{pf_s3},
	\begin{equation}
		S_3\ll \sum_{1\le n\le x/y}f(n)\Big(e_S\Big(\frac{x}{n}\Big)+1\Big).
	\end{equation}
	Since $e_S(x/n)\le (x/n) v_S(x/n)$ and $v_S(x/n)\le v_S(y)$ for $n\le x/y$, we get that
	\begin{equation}\label{eqn_s3}
		S_3	\ll xv_S(y)\sum_{1\le n\le x/y}\frac{f(n)}n +\sum_{1\le n\le x/y}f(n). 
	\end{equation}
	
	By the asymptotic estimate \eqref{mainterm}, we have $\sum_{1\le n\le x}f(n)\ll x(\log x)^{\alpha-1}$. Then by partial summations, we have $\sum_{1\le n\le x}f(n)/n\ll (\log x)^{\alpha}$.  So we have the following two estimates
	\begin{align}
		\sum_{1\le n\le x/y}f(n)&\ll \frac{x}{y}\Big(\log\frac{x}{y}\Big)^{\alpha-1}\ll\frac{x(\log x)^{\alpha-1}}{y},\\
		\sum_{1\le n\le x/y}\frac{f(n)}n&\le \sum_{1\le n\le x}\frac{f(n)}n\ll (\log x)^{\alpha}.
	\end{align}
	It follows by \eqref{eqn_s3} that
	\begin{equation}\label{pf_s3_bound}
		S_3	\ll xv_S(y)(\log x)^{\alpha} + \frac{x(\log x)^{\alpha-1}}{y}. 
	\end{equation}
	
	Combining \eqref{pf_eq}-\eqref{pf_s1}, \eqref{eqn_pf_s2_err}-\eqref{pf_s2-2} and \eqref{pf_s3_bound} together, we get that
	\begin{equation}\label{rs}
		\sum_{\substack{1\le n\le x \\ P^+(n)\in S}}h(n)=\alpha\delta(S)\int_{y}^x \Psi_h\of{\frac{x}t,t}\frac{dt}{\log t}+R_S(x,y),
	\end{equation}
	where 
	$$R_S(x,y)\ll x(\log x)^{\alpha-1}\set{\exp(-\tfrac12u\log u)+ v_S(y)\log x+\frac{1}{y}+o(1)}.$$
	In particular, if we take $S=\cP$, then
	\begin{equation}\label{rp}
		\sum_{1\le n\le x }h(n)=\alpha\int_{y}^x \Psi_h\of{\frac{x}t,t}\frac{dt}{\log t}+R_\cP(x,y).
	\end{equation}
	
	Plugging (\ref{rp}) into (\ref{rs}) finally gives us that
	\begin{equation}\label{key_estimate}
		\sum_{\substack{1\le n\le x \\ P^+(n)\in S}}h(n)=\delta(S)\sum_{1\le n\le x }h(n)+R(x,y)
	\end{equation}
	for $1\le u\le \log x/(\log\log x)^2$, where 
	\begin{equation}\label{key_error_term}
		R(x,y)\ll x(\log x)^{\alpha-1}\set{\exp(-\tfrac12u\log u)+ (v_S(y)+v_\cP(y))\log x+\frac{1}{y}+o(1)}.
	\end{equation}
	
	Now, we use Lemma~\ref{choice_of_y} to estimate $R(x,y)$. Notice that $v_S+v_\cP$ is decreasing, $v_S(x)\log x=o(1)$, and $v_{\cP}(x)\log x=o(1)$. Take $h(x)=\log x/(\log\log x)^2$.
	By Lemma~\ref{choice_of_y}, we can choose a positive function $y=y(x)$ such that $y(x)\to\infty$, $u=\log x/\log y\to\infty$, $u\le \log x/(\log\log x)^2$, and $\lim_{x\to\infty}(v_S(y)+v_\cP(y))\log x=0$, as $x\to\infty$. By \eqref{key_error_term}, it follows that $R(x,y(x))=o(x(\log x)^{\alpha-1})$ for such $y$. Thus, \eqref{eqn_pf_EK} follows by \eqref{key_estimate} immediately. This completes the proof of Theorem~\ref{mainthm_EK}. 
\end{proof}

\section{Proof of Theorem~\ref{mainthm_EKPNT}}\label{sec_pf_mainthm_EKPNT}

Let $[s]\colonequals\N\cap[1,s]$ be the set of natural numbers between $1$ and $s$. Then $[N]=\set{1,\dots,N}$. For a finite non-empty  subset $B\subset \N$, the \textit{Ces\`aro average} and the \textit{logarithmic average}  of an arithmetic function $a\colon B\to\C$ over $B$ are defined respectively by
$$
\BEu{n\in B} a(n) \colonequals \frac{1}{\vert B\vert }\sum_{n\in B} a(n)\qquad\text{and }\qquad
\BEul{n\in B} a(n) \colonequals \frac{\sum_{n\in B} {a(n)}/{n}}{\sum_{n\in B}{1}/{n}}.
$$
In the following three theorems, we cite the techniques on $\mathbb{E}$ and $\mathbb{E}^{\log}$ which Bergelson and Richter \cite{BergelsonRichter2020} used to prove the theorem~\eqref{eqn_BR2020thmA}.

\begin{proposition}[{\cite[Proposition~2.1]{BergelsonRichter2020}}]
	\label{prop_BR_inequality}
	Let $B\subset\N$ be a finite and non-empty set of integers. Then for any bounded arithmetic function $a\colon\N\to\C$ with $\vert a\vert \le1$ we have that
	\begin{equation}
		\label{eqn_BR_inequality}
		\limsup_{N\to\infty}\,\left\vert \BEu{n\in [N]} a(n)\,-\,  \BEul{m\in B}\, \BEu{n\in [\frac{N}{m}]} a(mn) \right\vert  \,\le\,\left( \BEul{m\in B}\BEul{n\in B}\Phi(n,m) \right)^{1/2},
	\end{equation}
	where $\Phi(m,n)\colonequals\gcd(m,n)-1$.
\end{proposition}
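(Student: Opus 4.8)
The plan is to convert the statement into an exact mean-value identity and then apply a single Cauchy--Schwarz inequality; everything else is elementary. Write $L\colonequals\sum_{m\in B}1/m$, so that $\BEul{m\in B}c_m=L^{-1}\sum_{m\in B}c_m/m$, and for $k\in\N$ set $r_B(k)\colonequals\#\set{m\in B\colon m\mid k}$. First I would rewrite the weighted double average. Since $B$ is finite, $\vert a\vert\le1$, and $\bigl\vert\tfrac1{m\lfloor N/m\rfloor}-\tfrac1N\bigr\vert\,\lfloor N/m\rfloor\ll_B N^{-1}$ for each $m\in B$, a short computation gives
\[
\BEul{m\in B}\BEu{n\in[N/m]}a(mn)
=\frac1{LN}\sum_{m\in B}\sum_{n=1}^{\lfloor N/m\rfloor}a(mn)+O_B\of{\tfrac1N}
=\frac1{LN}\sum_{k\le N}r_B(k)\,a(k)+O_B\of{\tfrac1N},
\]
the last equality obtained by collecting the terms with $mn=k$. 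Since $\BEu{n\in[N]}a(n)=\tfrac1N\sum_{k\le N}a(k)$, subtracting yields
\[
\BEu{n\in[N]}a(n)-\BEul{m\in B}\BEu{n\in[N/m]}a(mn)
=\frac1N\sum_{k\le N}\Bigl(1-\frac{r_B(k)}{L}\Bigr)a(k)+O_B\of{\tfrac1N}.
\]

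The only substantive step is then to discard the unknown function $a$ via Cauchy--Schwarz: using $\vert a\vert\le1$,
\[
\Bigl\vert\frac1N\sum_{k\le N}\Bigl(1-\frac{r_B(k)}{L}\Bigr)a(k)\Bigr\vert
\le\frac1N\sum_{k\le N}\Bigl\vert1-\frac{r_B(k)}{L}\Bigr\vert
\le\Bigl(\frac1N\sum_{k\le N}\Bigl(1-\frac{r_B(k)}{L}\Bigr)^2\Bigr)^{1/2}.
\]
It then remains to evaluate the last quantity as $N\to\infty$. Expanding the square, one uses $\sum_{k\le N}r_B(k)=\sum_{m\in B}\lfloor N/m\rfloor=LN+O_B(1)$ together with, since $\on{lcm}(m,n)=mn/\gcd(m,n)$,
\[
\sum_{k\le N}r_B(k)^2=\sum_{m,n\in B}\#\set{k\le N\colon\on{lcm}(m,n)\mid k}=N\sum_{m,n\in B}\frac{\gcd(m,n)}{mn}+O_B(1),
\]
to obtain
\[
\frac1N\sum_{k\le N}\Bigl(1-\frac{r_B(k)}{L}\Bigr)^2
=\frac1{L^2}\sum_{m,n\in B}\frac{\gcd(m,n)}{mn}-1+O_B\of{\tfrac1N}
=\BEul{m\in B}\BEul{n\in B}\gcd(m,n)-1+O_B\of{\tfrac1N}.
\]
Since $\BEul{m\in B}\BEul{n\in B}\Phi(m,n)=\BEul{m\in B}\BEul{n\in B}\gcd(m,n)-1$, combining the displays above and letting $N\to\infty$ gives $\limsup_{N\to\infty}\bigl\vert\BEu{n\in[N]}a(n)-\BEul{m\in B}\BEu{n\in[N/m]}a(mn)\bigr\vert\le\bigl(\BEul{m\in B}\BEul{n\in B}\Phi(m,n)\bigr)^{1/2}$, which is the claim.

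I do not expect a genuine obstacle: the argument is one application of Cauchy--Schwarz wrapped around an exact mean-value identity, and the only points requiring care are bookkeeping, namely verifying that replacing $1/(m\lfloor N/m\rfloor)$ by $1/N$ and the two floor-function errors in the mean-value sums all contribute only $O_B(1/N)$, which is harmless after passing to $\limsup$. Conceptually the calculation is transparent: the average $\BEul{m\in B}\BEu{n\in[N/m]}a(mn)$ samples each $k\le N$ with weight proportional to $r_B(k)$ rather than uniformly, and the mean-square deviation of that weight from the uniform one is exactly the $\gcd$-average on the right-hand side; this also makes clear why the bound vanishes precisely when the elements of $B$ are pairwise coprime.
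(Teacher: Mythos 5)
Your proof is correct: the reduction to the exact identity $\frac1N\sum_{k\le N}(1-r_B(k)/L)a(k)+O_B(1/N)$, the Cauchy--Schwarz step, and the second-moment computation via $\sum_{k\le N}r_B(k)^2=\sum_{m,n\in B}\lfloor N/\operatorname{lcm}(m,n)\rfloor$ all check out, and the floor-function bookkeeping is handled adequately. The paper itself gives no proof of this proposition --- it is quoted verbatim from Bergelson and Richter --- and your argument is essentially the same variance-plus-Cauchy--Schwarz computation as in their original Proposition 2.1, so there is nothing further to compare.
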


\begin{lemma}[{\cite[Lemma~2.2]{BergelsonRichter2020}}]
	\label{lem_BR_keylemma1}
	Let $\P_k\colonequals\set{n\in \N\colon \Omega(n)=k}$, $k\ge1$.  For all $\ve\in(0,1)$ and $\rho\in(1,1+\ve]$, there exist two finite and non-empty sets $B_1,B_2\subset\N$ satisfying the following properties:
	\begin{enumerate}[{\rm (i)}]
		\item\label{itm_a}
		$B_1\subset \P_1$ and $B_2\subset \P_2$;
		\item\label{itm_b}
		$\vert B_1\cap [\rho^j,\rho^{j+1})\vert =\vert B_2\cap [\rho^j,\rho^{j+1})\vert $ for all $j\in\N\cup\{0\}$;
		\item\label{itm_c}
		$\mathbb{E}^{\log}_{m\in B_1}\mathbb{E}^{\log}_{n\in B_1} \Phi(m,n)\le \ve$ and $\mathbb{E}^{\log}_{m\in B_2}\mathbb{E}^{\log}_{n\in B_2} \Phi(m,n)\le \ve$.
	\end{enumerate}
\end{lemma}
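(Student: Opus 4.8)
The plan is to build $B_1$ as a large finite set of primes and $B_2$ as a matching collection of products of two primes of comparable size, so that (ii) holds by construction and (iii) reduces to an elementary second-moment estimate.

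First I would record the identity, valid for every finite $B\subset\N$ and obtained by writing $\gcd(m,n)-1=\sum_{d\mid\gcd(m,n),\,d>1}\varphi(d)$ and reversing the order of summation,
\begin{equation*}
\mathbb{E}^{\log}_{m\in B}\mathbb{E}^{\log}_{n\in B}\Phi(m,n)
=\of{\sum_{n\in B}\frac1n}^{-2}\sum_{d>1}\varphi(d)\of{\sum_{\substack{n\in B\\ d\mid n}}\frac1n}^{\!2}.
\end{equation*}
For $B=B_1$ a set of distinct primes only $d=p\in B_1$ contributes, so the left side is $\le\big(\sum_{p\in B_1}1/p\big)^{-1}$. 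For $B=B_2\subset\P_2$ every element has at most two prime factors, so the only $d$ with more than one multiple in $B_2$ are primes; writing $v_p\colonequals\sum_{n\in B_2,\,p\mid n}1/n$, one bounds the contribution of the remaining $d$ by $O\big(\sum_{n\in B_2}1/n\big)$ and obtains
\begin{equation*}
\mathbb{E}^{\log}_{m\in B_2}\mathbb{E}^{\log}_{n\in B_2}\Phi(m,n)
\ll\of{\sum_{n\in B_2}\frac1n}^{-2}\of{\sum_{p}p\,v_p^2+\sum_{n\in B_2}\frac1n}.
\end{equation*}

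Next I would fix a parameter $\theta<\tfrac1{1+e}$ (say $\theta=\tfrac14$) and integers $J_0<J_1$, to be chosen large, and set $B_1\colonequals\set{p\in\cP\colon \rho^{J_0}\le p<\rho^{J_1}}$, with $a_k\colonequals\#\big(B_1\cap[\rho^k,\rho^{k+1})\big)$ for $J_0\le k<J_1$. Splitting according to the size of the smaller prime factor shows that the number of $n\in\P_2\cap[\rho^k,\rho^{k+1})$ with both prime factors larger than $n^{\theta}$ is $\sim a_k\log\tfrac{1-\theta}{\theta}$, which exceeds $a_k$ once $k$ is large (the threshold $\tfrac1{1+e}$ arises from $\int_\theta^{1/2}\frac{dt}{t(1-t)}=\log\tfrac{1-\theta}{\theta}$). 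Hence, for $J_0$ large, I can pick in each such block exactly $a_k$ such $n$ and let $B_2$ be their union; then $B_2\subset\P_2$, and $B_1,B_2$ meet exactly the same blocks with the same cardinalities (and neither meets a block of index $<J_0$ or $\ge J_1$), so (i) and (ii) hold.

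Finally I would verify (iii). By Mertens' theorem $\sum_{p\in B_1}1/p\asymp\log(J_1/J_0)$, and since the two sets have the same block counts, $\sum_{n\in B_2}1/n$ is the same up to a factor $\rho<2$; both can be made arbitrarily large, which already settles (iii) for $B_1$. The crucial — and, I expect, hardest — step is the bound $v_p=O(1/p)$ for $B_2$: if $p\mid n$ with $n\in[\rho^k,\rho^{k+1})\cap B_2$ then $\rho^{k\theta}<p<\rho^{k(1-\theta)}$, so $p$ can divide elements of $B_2$ in only $O(\log p)$ consecutive blocks, and in each of them the relevant reciprocal sum is $O\big(1/(p\log p)\big)$; here it is decisive that $n$ has only two prime factors. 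Granting this, $\sum_p p\,v_p^2\ll\sum_{p\ \mathrm{used}}1/p\ll\log(J_1/J_0)$, so the second display is $O\big(1/\log(J_1/J_0)\big)$, which is $<\ve$ once $J_1/J_0$ is large enough in terms of $\ve$; $J_0$ is then taken large enough to license the asymptotics above. The real tension is the choice of $\theta$: it must be small enough that every prime carries only $O(1/p)$ of the total reciprocal mass of $B_2$, yet not so small that the supply of eligible semiprimes in some block runs out — and both demands can be met precisely because $\theta<\tfrac1{1+e}$ is available.
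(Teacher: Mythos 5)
The paper does not prove this lemma at all: it is quoted verbatim from Bergelson--Richter \cite[Lemma~2.2]{BergelsonRichter2020} and used as a black box, so there is no in-paper argument to compare yours against. Judged on its own, your proof is essentially sound and is in the same spirit as the original (match primes with semiprimes interval by interval, then control a GCD sum). The identity $\gcd(m,n)-1=\sum_{d\mid\gcd(m,n),\,d>1}\varphi(d)$ correctly reduces (iii) to $\big(\sum_{n\in B}1/n\big)^{-2}\sum_{d>1}\varphi(d)\big(\sum_{n\in B,\,d\mid n}1/n\big)^2$; for $B_1\subset\P_1$ only the diagonal $d=m=n=p$ survives, and for $B_2\subset\P_2$ a composite $d$ dividing an element of $\P_2$ must equal it, so your split into a prime part $\sum_p p\,v_p^2$ plus a diagonal part $O\big(\sum_{n\in B_2}1/n\big)$ is legitimate. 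The counting step is also right: the density of $n\in\P_2$ with both prime factors exceeding $n^{\theta}$ is $\log\tfrac{1-\theta}{\theta}$ relative to the primes, and $\log\tfrac{1-\theta}{\theta}>1$ exactly when $\theta<\tfrac1{1+e}$, so with $\theta=\tfrac14$ each block $[\rho^k,\rho^{k+1})$ with $k$ large supplies at least $a_k$ eligible semiprimes; note this does require a prime (and semiprime) count in the intervals $[\rho^k,\rho^{k+1})$, which is fine because their length is $\asymp\rho^k$.

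Two places deserve a little more care than your sketch gives them. First, the bound $v_p\ll 1/p$: in a block $k$ the multiples of $p$ in $B_2$ correspond to primes $q$ in an interval of length $\rho^k(\rho-1)/p$, and when $p$ is near the top of its admissible range $\rho^{k(1-\theta)+1}$ this interval can be short, so the Chebyshev/Brun--Titchmarsh count carries a $+O(1)$ term; one checks that these boundary contributions still sum to $O(1/p)$ (using $p\le\rho^{k(1-\theta)+1}$, hence $\rho^{-k}\ll p^{-1/(1-\theta)}$), but the implied constant depends on $\rho$ and $\theta$, which is harmless only because $\rho$ and $\theta$ are fixed before $J_0,J_1$ are chosen --- say this explicitly. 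Second, your final chain $\sum_p p\,v_p^2\ll\sum_p v_p=\sum_{n\in B_2}\omega(n)/n\ll\sum_{n\in B_2}1/n\asymp\log(J_1/J_0)$ is the cleaner way to finish and avoids even needing to locate which primes are ``used''. With these points filled in, the argument gives (iii) with bound $O\big(1/\log(J_1/J_0)\big)$ for both sets, and (i), (ii) hold by construction, so the proof goes through.
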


\begin{lemma}[{\cite[Lemma~2.3]{BergelsonRichter2020}}]
	\label{lem_BR_keylemma2}
	Fix $\ve\in(0,1)$ and $\rho\in(1,1+\ve]$. 
	Let $B_1$ and $B_2$ be two finite non-empty subsets of $\N$ satisfying property {\rm (ii)} in Lemma~\ref{lem_BR_keylemma1}. Then for any bounded arithmetic function $a\colon\N\to \C$ with $\vert a\vert \le 1$ we have 
	\begin{equation}
		\left\vert \,\BEul{p\in B_{1}} \, \BEu{n\in [\frac{N}{p}]} a(n) ~-~ \BEul{q\in B_{2}} \, \BEu{n\in [\frac{N}{q}]} a(n)\,\right\vert  ~\le~ 5\ve.
	\end{equation}
\end{lemma}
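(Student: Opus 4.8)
The plan is to exploit the only structural hypothesis on $B_1,B_2$: the blocks $B_i^{(j)}\colonequals B_i\cap[\rho^j,\rho^{j+1})$ have equal sizes $\vert B_1^{(j)}\vert=\vert B_2^{(j)}\vert$ for every $j\ge0$; denote this common size by $N_j$, and note that only finitely many $N_j$ are nonzero since $B_1,B_2$ are finite. Put $G(t)\colonequals\BEu{n\in[t]}a(n)=\tfrac1{\lfloor t\rfloor}\sum_{n\le t}a(n)$ for $t\ge1$, so that the quantity to be bounded is $\left\vert\BEul{p\in B_1}G(N/p)-\BEul{q\in B_2}G(N/q)\right\vert$. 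Each logarithmic average is a convex combination of the numbers $G(N/p)$ with strictly positive weights, and I would regroup it over blocks as $\BEul{p\in B_i}G(N/p)=\sum_{j:\,N_j\ge1} w_i^{(j)}\,\overline{G}_i^{(j)}$, where $w_i^{(j)}\colonequals\frac{\sum_{p\in B_i^{(j)}}1/p}{\sum_{p\in B_i}1/p}$ is the logarithmic mass of the $j$-th block (so $\sum_j w_i^{(j)}=1$) and $\overline{G}_i^{(j)}\colonequals\frac{\sum_{p\in B_i^{(j)}}G(N/p)/p}{\sum_{p\in B_i^{(j)}}1/p}$ is the corresponding within-block average.

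The engine is that, inside a single block, both the weights $1/p$ and the values $G(N/p)$ vary only slightly. For the weights: if $p,p'\in[\rho^j,\rho^{j+1})$ then $1/p$ and $1/p'$ differ by a factor in $[1/\rho,\rho]$, so $\sum_{p\in B_i^{(j)}}1/p$ lies in $[N_j\rho^{-j-1},\,N_j\rho^{-j}]$; comparing $w_i^{(j)}$ with the reference weight $\beta_j\colonequals\frac{N_j\rho^{-j}}{\sum_k N_k\rho^{-k}}$ — which depends only on the common counts $(N_j)_j$ — gives $w_i^{(j)}/\beta_j\in[1/\rho,\rho]$, hence $\vert w_i^{(j)}-\beta_j\vert\le(\rho-1)\beta_j$ and $\sum_j\vert w_i^{(j)}-\beta_j\vert\le(\rho-1)\sum_j\beta_j=\rho-1\le\ve$, so $\sum_j\vert w_1^{(j)}-w_2^{(j)}\vert\le2\ve$. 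For the values: if $p,p'\in[\rho^j,\rho^{j+1})$ then $N/p$ and $N/p'$ both lie in $(N\rho^{-j-1},N\rho^{-j}]$ and so differ by a factor $\le\rho$; combining the elementary bound $\vert G(s)-G(t)\vert\le2(1-\lfloor s\rfloor/\lfloor t\rfloor)$ for $1\le s\le t$ with $\lfloor s\rfloor/\lfloor t\rfloor\ge(s-1)/t\ge 1/\rho-1/(\rho s)$ (valid when $t\le\rho s$) yields $\vert G(N/p)-G(N/p')\vert\le2(1-1/\rho)+O(1/N)\le2\ve+O(1/N)$ uniformly in $j$, the implied constant depending on $B_1,B_2$. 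Hence $\set{G(N/p):p\in B_1^{(j)}\cup B_2^{(j)}}$ lies in an interval of length $\le2\ve+O(1/N)$, so $\vert\overline{G}_1^{(j)}-\overline{G}_2^{(j)}\vert\le2\ve+O(1/N)$ for every $j$.

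The conclusion is then bookkeeping: from
\[
\BEul{p\in B_1}G(N/p)-\BEul{q\in B_2}G(N/q)=\sum_j w_1^{(j)}\bigl(\overline{G}_1^{(j)}-\overline{G}_2^{(j)}\bigr)+\sum_j\bigl(w_1^{(j)}-w_2^{(j)}\bigr)\overline{G}_2^{(j)},
\]
the first sum has absolute value $\le(2\ve+O(1/N))\sum_j w_1^{(j)}=2\ve+O(1/N)$, and the second is $\le\sum_j\vert w_1^{(j)}-w_2^{(j)}\vert\le2\ve$ because $\vert\overline{G}_2^{(j)}\vert\le1$. Hence the left-hand side is $\le4\ve+O(1/N)$, which is $\le5\ve$ once $N$ is large enough (explicitly, once $N\ge2\max(B_1\cup B_2)/\ve$).

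I expect the only genuinely delicate point to be the uniformity asserted in the value estimate: the partial-summation bound for $\vert G(s)-G(t)\vert$ is effective only when $s$ is bounded away from $0$, so one must check that the blocks with $N/p$ small — i.e.\ $p$ comparable to $N$ — do not spoil things. This is exactly the source of the $O(1/N)$ term and the reason the inequality is stated for large $N$: since $B_1,B_2$ are a fixed pair of finite sets, as soon as $N>\max(B_1\cup B_2)$ every $N/p$ with $p\in B_1\cup B_2$ exceeds $1$, and the residual error is $O(1/N)$. All the remaining steps — the block regrouping, the comparison of $w_i^{(j)}$ with $\beta_j$, and the elementary estimate for $\vert G(s)-G(t)\vert$ — are routine.
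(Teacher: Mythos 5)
The paper does not prove this lemma; it is quoted verbatim from Bergelson--Richter \cite[Lemma~2.3]{BergelsonRichter2020}, whose proof pairs $B_1$ and $B_2$ via a bijection that matches elements within each block $[\rho^j,\rho^{j+1})$. Your block-level regrouping (comparing the logarithmic masses $w_i^{(j)}$ to the common reference weights $\beta_j$, and using that $\BEu{n\in[N/p]}a(n)$ varies by at most $2(1-1/\rho)+O(1/N)$ within a block) is the same idea in aggregated form, and your estimates check out, giving $4\ve+O(1/N)\le 5\ve$. The only caveat is the one you already flag: the bound is obtained for $N$ large relative to $B_1\cup B_2$ and $\ve$, which is exactly how the lemma is used in this paper (inside a $\limsup_{N\to\infty}$), so nothing is lost.
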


Similar to Sect.~\ref{sec_pf_mainthm_EK}, we put $\psi(n)\colonequals\frac{\Omega(n) - \log \log N }{\sqrt{\log \log N}}$ here without causing any ambiguity of notation. In Theorem~\ref{mainthm_EKPNT}, we have two extra parameters $P^+(n)$ and $F(\psi(n))$ compared with \eqref{eqn_BR2020thmA}. The only new ingredient required in the proof of Theorem~\ref{mainthm_EKPNT} is that on the left hand side of \eqref{eqn_mainthm_EKPNT} the right translation on these two parameters causes a minor perturbation only.  More explicitly, we have the following lemma.

\begin{lemma}\label{lem_keylem_EKPNT}
	Let $F_1,a\colon \N\to\C$ be two bounded functions and let $F_2 \in C_c(\R)$.	Then for any integer $m\in\N$, we have that
	\begin{equation}\label{eqn_keylem_EKPNT}
		\BEu{n\in[N]}F_1(P^+(mn))F_2(\psi(mn))a(n)=\BEu{n\in[N]}F_1(P^+(n))F_2(\psi(n))a(n)+o_{N\to\infty}(1).
	\end{equation}
\end{lemma}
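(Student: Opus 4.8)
The plan is to treat the fixed integer $m$ as a bounded perturbation of the two arguments $P^+(mn)$ and $\psi(mn)$, dealing with the $\psi$-factor and the $P^+$-factor separately; no uniformity in $m$ is needed, which keeps the argument clean.

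First I would dispose of the $F_2$-factor. Since $\Omega$ is completely additive, $\Omega(mn)=\Omega(m)+\Omega(n)$, so
\[
\psi(mn)=\psi(n)+\frac{\Omega(m)}{\sqrt{\log\log N}},
\]
and because $\Omega(m)$ is a fixed constant the shift tends to $0$ as $N\to\infty$ uniformly in $n\in[N]$. As $F_2\in C_c(\R)$ is uniformly continuous, this gives $F_2(\psi(mn))=F_2(\psi(n))+o_{N\to\infty}(1)$ uniformly in $n$; multiplying through by the bounded factor $F_1(P^+(mn))a(n)$ and averaging over $[N]$ yields
\[
\BEu{n\in[N]}F_1(P^+(mn))F_2(\psi(mn))a(n)=\BEu{n\in[N]}F_1(P^+(mn))F_2(\psi(n))a(n)+o_{N\to\infty}(1).
\]

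Next I would remove the discrepancy in the $P^+$-argument. The key elementary observation is that $P^+(mn)=P^+(n)$ whenever $P^+(n)\ge P^+(m)$, so $F_1(P^+(mn))$ and $F_1(P^+(n))$ can differ only for those $n$ which are $(P^+(m)-1)$-friable. Writing $y_0\colonequals P^+(m)$, a fixed constant, every $y_0$-friable integer $n\le N$ has the form $\prod_{p\le y_0}p^{a_p}$ with each exponent at most $\log N/\log 2$, so the number of such $n$ is $\ll_m(\log N)^{\pi(y_0)}=o(N)$ (alternatively one may quote any standard friable-number estimate for a fixed smoothness parameter). Since $F_1$, $F_2$ and $a$ are all bounded, the part of either average coming from $\set{n\le N\colon P^+(n)<y_0}$ is $O_m\of{(\log N)^{\pi(y_0)}/N}=o_{N\to\infty}(1)$, while on the complementary set the two $F_1$-values agree term by term. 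Hence
\[
\BEu{n\in[N]}F_1(P^+(mn))F_2(\psi(n))a(n)=\BEu{n\in[N]}F_1(P^+(n))F_2(\psi(n))a(n)+o_{N\to\infty}(1),
\]
and chaining this with the previous identity gives \eqref{eqn_keylem_EKPNT}.

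This argument is essentially routine rather than delicate: the only genuine analytic input is that friable numbers with a fixed smoothness bound have counting function $o(N)$, and the only care required is to use the boundedness of $F_1$, $F_2$, $a$ so that a uniform shift of size $o(1)$ in the $F_2$-variable and an exceptional set of size $o(N)$ for the $F_1$-variable both wash out after averaging. If there is any obstacle it is purely bookkeeping, namely carrying out the two reductions in an order where the factor left untouched is still manifestly bounded.
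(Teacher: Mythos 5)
Your argument is correct and follows essentially the same route as the paper's proof: both rest on the complete additivity of $\Omega$ plus uniform continuity of $F_2$ to absorb the shift in $\psi$, and on the fact that $P^+(mn)=P^+(n)$ outside a friable exceptional set of density zero. The only differences are cosmetic — you treat the $F_2$-factor before the $F_1$-factor and prove the friable count bound $\ll_m(\log N)^{\pi(P^+(m))}=o(N)$ directly, where the paper instead splits on $P^+(n)\le m$ versus $P^+(n)>m$ and cites a quantitative $O(N^{-c})$ bound from an earlier reference.
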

\begin{proof} 
	Eq. \eqref{eqn_keylem_EKPNT} is trivial for $m=1$. Suppose $m\ge2$. We break the average up into two parts as follows:
	\begin{align}
		&\quad\BEu{n\in[N]}F_1(P^+(mn))F_2(\psi(mn))a(n)\nonumber\\
		&=\BEu{n\in[N]}1_{P^+(n)\le m}F_1(P^+(mn))F_2(\psi(mn))a(n)\nonumber\\
		&\qquad+\BEu{n\in[N]}1_{P^+(n)> m}F_1(P^+(mn))F_2(\psi(mn))a(n)\nonumber\\
		&=\BEu{n\in[N]}1_{P^+(n)\le m}F_1(P^+(mn))F_2(\psi(mn))a(n)\nonumber\\
		&\qquad+\BEu{n\in[N]}1_{P^+(n)> m}F_1(P^+(n))F_2(\psi(mn))a(n)\nonumber\\
		&\colonequals S_4+S_5.
		\label{pmaxlem_pf_eq1}
	\end{align}
	For the first average, following the proof of \cite[Lemma~4.5]{Wang2021} we have
	$$S_4=O(N^{-c}),$$
	where $c=1/(2\log m)$.  
	
	As regards $S_5$, observe that $\psi(mn)-\psi(n)=\frac{\Omega(m)}{\sqrt{\log\log N}}$. Since $F_2$ is uniformly continuous, we get that
	\begin{equation}\label{eqn_keyeqn2}
		F_2(\psi(mn))=F_2(\psi(n))+o_{N\to\infty}(1)
	\end{equation}
	holds uniformly for $n\le N$ as $N\to \infty$. It follows that
	\begin{equation}
		S_5=\BEu{n\in[N]}1_{P^+(n)> m}F_1(P^+(n))F_2(\psi(n))a(n)+o_{N\to\infty}(1).
	\end{equation}
	
	By \eqref{pmaxlem_pf_eq1}, we obtain 
	\begin{equation}\label{pmaxlem_pf_eq2}
		\begin{aligned}
			&\BEu{n\in[N]}F_1(P^+(mn))F_2(\psi(mn))a(n)\\
			&=\BEu{n\in[N]}1_{P^+(n)> m}F_1(P^+(n))F_2(\psi(n))a(n)+o_{N\to\infty}(1).
		\end{aligned}
	\end{equation}
	Similar to \eqref{pmaxlem_pf_eq1}, by $S_4=O(N^{-c})$ we have
	\begin{equation}\label{pmaxlem_pf_eq3}
		\BEu{n\in[N]}F_1(P^+(n))F_2(\psi(n))a(n)=\BEu{n\in[N]}1_{P^+(n)> m}F_1(P^+(n))F_2(\psi(n))a(n)+O(N^{-c}).
	\end{equation}
	Hence \eqref{eqn_keylem_EKPNT} follows immediately by \eqref{pmaxlem_pf_eq2} and \eqref{pmaxlem_pf_eq3}.
\end{proof}

By Lemma~\ref{lem_keylem_EKPNT}, replacing $F(P^+(n))$ by $F_1(P^+(n))F_2(\psi(n))$ in \cite[Theorem~4.6]{Wang2021}, we get the following theorem. Then Theorem~\ref{mainthm_EKPNT} follows immediately by Theorem~\ref{mainthm_EK} and Theorem~\ref{thm_keythm_pmax} according to the  argument in \cite[Theorem~A]{BergelsonRichter2020} due to Bergelson and Richter. The details are given as follows for readers' convenience.

\begin{theorem}\label{thm_keythm_pmax}
	Let $F_1\colon \N\to\C$ be a bounded function, and let $F_2 \in C_c(\R)$.	Then for any bounded arithmetic function $a\colon \N\to\C$, we have that
	\begin{equation}\label{eqn_keythm_pmax}
		\BEu{n\in[N]}F_1(P^+(n))F_2(\psi(n))a(\Omega(n)+1)=\BEu{n\in[N]}F_1(P^+(n))F_2(\psi(n))a(\Omega(n))+o_{N\to \infty}(1).
	\end{equation}
\end{theorem}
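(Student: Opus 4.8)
The plan is to mimic the structure of Bergelson and Richter's proof of their Theorem~A, as adapted in \cite[Theorem~4.6]{Wang2021}, but carrying along the extra weight $F_2(\psi(n))$ throughout. Write $w(n)\colonequals F_1(P^+(n))F_2(\psi(n))$; this is a bounded arithmetic function, so $w(n)a(\Omega(n))$ and $w(n)a(\Omega(n)+1)$ are both bounded. The key point is that the shift $\Omega(n)\mapsto\Omega(n)+1$ in the argument of $a$ corresponds, via the identity $\Omega(pn)=\Omega(n)+1$, to multiplying $n$ by a prime. So I would apply Proposition~\ref{prop_BR_inequality} with the bounded function $n\mapsto w(n)a(\Omega(n))$ and the set $B=B_1\subset\P_1$ furnished by Lemma~\ref{lem_BR_keylemma1}: since every $m\in B_1$ is prime, $\Omega(mn)=\Omega(n)+1$, so the inner average $\BEu{n\in[N/m]} w(mn)a(\Omega(mn))$ becomes $\BEu{n\in[N/m]} w(mn)a(\Omega(n)+1)$. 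Here is where Lemma~\ref{lem_keylem_EKPNT} enters: it lets me replace $w(mn)=F_1(P^+(mn))F_2(\psi(mn))$ by $w(n)=F_1(P^+(n))F_2(\psi(n))$ at the cost of $o_{N\to\infty}(1)$, uniformly for the finitely many $m\in B_1$. Combined with property (iii) of Lemma~\ref{lem_BR_keylemma1} (which makes the right-hand side of \eqref{eqn_BR_inequality} at most $\sqrt\ve$), this shows that $\BEu{n\in[N]} w(n)a(\Omega(n))$ is within $\sqrt\ve+o(1)$ of $\BEul{m\in B_1}\BEu{n\in[N/m]} w(n)a(\Omega(n)+1)$.

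Next I would run the same argument with $B=B_2\subset\P_2$ and the bounded function $n\mapsto w(n)a(\Omega(n)+1)$. Since every $m\in B_2$ has $\Omega(m)=2$, we get $\Omega(mn)=\Omega(n)+2$, hence $a(\Omega(mn)+1)=a(\Omega(n)+3)$... so instead I should be slightly more careful: the clean move is to apply Proposition~\ref{prop_BR_inequality} to $n\mapsto w(n)a(\Omega(n)+1)$ with $B_2$, getting that $\BEu{n\in[N]} w(n)a(\Omega(n)+1)$ is close to $\BEul{m\in B_2}\BEu{n\in[N/m]} w(mn)a(\Omega(mn)+1) = \BEul{m\in B_2}\BEu{n\in[N/m]} w(mn)a(\Omega(n)+3)$, which is not obviously what I want. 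The correct bookkeeping, following Bergelson--Richter, is instead: apply the proposition to $n\mapsto w(n)a(\Omega(n))$ with $B_2$, so that the inner average is $\BEu{n\in[N/m]} w(mn)a(\Omega(n)+2)$; and separately apply it to $n\mapsto w(n)a(\Omega(n)+1)$ with $B_1$, so that the inner average is $\BEu{n\in[N/m]} w(mn)a(\Omega(n)+2)$ as well. After invoking Lemma~\ref{lem_keylem_EKPNT} to strip the $m$ from $w(mn)$ in both, the two inner expressions become $\BEul{m\in B_i}\BEu{n\in[N/m]} w(n)a(\Omega(n)+2)$ for $i=2$ and $i=1$ respectively; by Lemma~\ref{lem_BR_keylemma2} (whose hypothesis is property (ii), shared by $B_1,B_2$) these two differ by at most $5\ve$. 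Chaining the four inequalities yields
\[
\left\vert\,\BEu{n\in[N]} w(n)a(\Omega(n)) - \BEu{n\in[N]} w(n)a(\Omega(n)+1)\,\right\vert \le 2\sqrt\ve + 5\ve + o_{N\to\infty}(1).
\]
Letting $N\to\infty$ and then $\ve\to0$ gives \eqref{eqn_keythm_pmax}.

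I would carry out the steps in this order: (1) reduce to showing the two Ces\`aro averages are asymptotically equal by the triangle inequality; (2) fix $\ve$, fix $\rho\in(1,1+\ve]$, invoke Lemma~\ref{lem_BR_keylemma1} to get $B_1,B_2$; (3) apply Proposition~\ref{prop_BR_inequality} twice (to $w\cdot a(\Omega(\cdot))$ with $B_2$, and to $w\cdot a(\Omega(\cdot)+1)$ with $B_1$), using (iii) to bound the error terms; (4) use $\Omega(mn)=\Omega(n)+\Omega(m)$ together with $\Omega(m)=1$ on $B_1$ and $\Omega(m)=2$ on $B_2$ to rewrite the shifted arguments and align them both to $a(\Omega(n)+2)$; (5) apply Lemma~\ref{lem_keylem_EKPNT} to each of the finitely many $m$ to replace $w(mn)$ by $w(n)$ up to $o(1)$; (6) apply Lemma~\ref{lem_BR_keylemma2} to compare the resulting $B_1$- and $B_2$-averages; (7) assemble and take limits. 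The main obstacle — and the only genuinely new ingredient relative to \cite{Wang2021} and \cite{BergelsonRichter2020} — is precisely step (5), controlling the effect of the right translation $n\mapsto mn$ on the factor $F_2(\psi(mn))$; but this is exactly what Lemma~\ref{lem_keylem_EKPNT} delivers, via the uniform-continuity estimate $F_2(\psi(mn))=F_2(\psi(n))+o(1)$ coming from $\psi(mn)-\psi(n)=\Omega(m)/\sqrt{\log\log N}\to0$. Everything else is a faithful transcription of the Bergelson--Richter combinatorial argument with the harmless bounded weight $F_1(P^+(n))F_2(\psi(n))$ carried along.
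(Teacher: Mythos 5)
Your proposal is correct and matches the paper's proof essentially step for step: the same pairing of $B_1$ with $a(\Omega(\cdot)+1)$ and $B_2$ with $a(\Omega(\cdot))$ via Proposition~\ref{prop_BR_inequality}, alignment of both inner averages to $a(\Omega(n)+2)$, stripping of $w(mn)$ to $w(n)$ via Lemma~\ref{lem_keylem_EKPNT}, comparison via Lemma~\ref{lem_BR_keylemma2}, and the final bound $2\ve^{1/2}+5\ve$. The bookkeeping you settle on after your mid-paragraph self-correction is exactly the paper's.
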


\begin{proof} We may assume that $\vert F_1\vert , \vert F_2\vert \le1$ and $\vert a\vert \le1$. Let $\ve\in(0,1)$ and $\rho\in (1,1+\ve]$.  Let $B_1$ and $B_2$ be two finite non-empty sets satisfying the properties (i)-(iii) in Lemma~\ref{lem_BR_keylemma1}. 
	For \eqref{eqn_keythm_pmax}, we set
	\begin{align*}
	&S_6\colonequals\BEu{n\in[N]}F_1(P^+(n))F_2(\psi(n))a(\Omega(n)+1),\\
	&S_7\colonequals\BEu{n\in[N]}F_1(P^+(n))F_2(\psi(n))a(\Omega(n)).
	\end{align*}
	And we put
	\begin{align*}
		S_{B_1}&\colonequals\BEul{p\in B_1}\, \BEu{n\in [\frac{N}{p}]}F_1(P^+(pn))F_2(\psi(pn))a(\Omega(pn)+1), \\
		\quad S_{B_2}&\colonequals\BEul{q\in B_2}\, \BEu{n\in [\frac{N}{q}]}F_1(P^+(qn))F_2(\psi(qn))a(\Omega(qn))
	\end{align*}
	for $B_1$ and $B_2$ respectively. 
	Then by Proposition~\ref{prop_BR_inequality} and Lemma~\ref{lem_BR_keylemma1} (iii), we get that
	\begin{equation}\label{pmaxpf_eq1}
		\limsup_{N\to\infty}\vert S_6-S_{B_1}\vert \le \ve^{1/2} \quad\text{and}\quad \limsup_{N\to\infty}\vert S_7-S_{B_2}\vert \le \ve^{1/2}.
	\end{equation}
	By Lemma~\ref{lem_BR_keylemma1} (i),  we have $\Omega(pn)=\Omega(n)+1$ and $\Omega(qn)=\Omega(n)+2$ for $p\in B_1, q\in B_2$. It follows that
	\begin{align*}
		S_{B_1}&=\BEul{p\in B_1}\, \BEu{n\in [\frac{N}{p}]}F_1(P^+(pn))F_2(\psi(pn))a(\Omega(n)+2), \\
		\quad S_{B_2}&=\BEul{q\in B_2}\, \BEu{n\in [\frac{N}{q}]}F_1(P^+(qn))F_2(\psi(qn))a(\Omega(n)+2).
	\end{align*}
	
	Since $B_1$ and $B_2$ are finite, by Lemma~\ref{lem_keylem_EKPNT} we have
	\begin{equation}
		\begin{aligned}
		&\BEu{n\in [\frac{N}{m}]}F_1(P^+(mn))F_2(\psi(mn))a(\Omega(n)+2)\\
		&=\BEu{n\in[\frac{N}{m}]}F_1(P^+(n))F_2(\psi(n))a(\Omega(n)+2)+o(1)
		\end{aligned}
	\end{equation}
	for any $m\in B_1\cup B_2$. It follows that
	\begin{align}
		S_{B_1}&=\BEul{p\in B_1}\, \BEu{n\in [\frac{N}{p}]}F_1(P^+(n))F_2(\psi(n))a(\Omega(n)+2)+o(1), \\
		\quad S_{B_2}&=\BEul{q\in B_2}\, \BEu{n\in [\frac{N}{q}]}F_1(P^+(n))F_2(\psi(n))a(\Omega(n)+2)+o(1).
	\end{align}
	Then by Lemma~\ref{lem_BR_keylemma2} and Lemma~\ref{lem_BR_keylemma1} (ii), we get that
	\begin{equation}\label{pmaxpf_eq2}
		\limsup_{N\to\infty} \vert S_{B_1}-S_{B_2}\vert \le5\ve.
	\end{equation}
	
	Combining \eqref{pmaxpf_eq1} and \eqref{pmaxpf_eq2}, we obtain
	\begin{equation}
		\limsup_{N\to\infty} \vert S_6-S_7\vert \le 2\ve^{1/2}+5\ve.
	\end{equation} 
	Since $\ve$ is arbitrarily small,  we conclude that $\limsup_{N\to\infty} \vert S_6-S_7\vert =0$. This completes the proof.
\end{proof}

\begin{proof}[Proof of Theorem~\ref{mainthm_EKPNT}]
	For any $x\in X$ and $F \in C_c(\R)$,
	we define the measure $\mu_N$ on $X$ by
	\[
	\mu_N\colonequals\frac1N\sum_{n=1}^N 1_{P^+(n)\in S}F(\psi(n))\delta_{T^{\Omega(n)}x}
	\]
	for $N \in \N$, where $\delta_y$ denotes the point mass at $y$ for any $y\in X$. Define $$\mu'\colonequals\delta(S)\cdot\Big( \int_{-\infty}^{\infty}F(t)e^{-t^2/2}\,dt\Big) \cdot \mu.$$ 
	Then Eq. \eqref{mainthm_EKPNT} is equivalent to the assertion that  $\mu_N\to\mu'$ in the weak-$\ast$ topology. Since $\mu$ is uniquely ergodic, it suffices to show  the following $T$-invariance
	\begin{equation}\label{eqn_T-invariance}
		\lim_{N \to \infty} \Big\vert  \int_X g \circ T\, d\mu_N - \int_X g  \, d\mu_N  \Big\vert  = 0
	\end{equation}
	for all $g\in C(X)$. Actually, if we take $F_1(n)=1_{n\in S}$ and $a(n)=g(T^nx_0)$ in Theorem~\ref{thm_keythm_pmax}, then we get the following equivalent form of \eqref{eqn_T-invariance}:
	\begin{multline}
	    \lim_{N \to \infty} \Big\vert  \frac1N\sum_{n=1}^N1_{P^+(n)\in S}F(\psi(n))g(T^{\Omega(n)+1}x_0)\\
	    - \frac1N\sum_{n=1}^N1_{P^+(n)\in S}F(\psi(n))g(T^{\Omega(n)}x_0) \Big\vert  = 0.
	\end{multline}
	Thus, this completes the proof of Theorem~\ref{mainthm_EKPNT}.
\end{proof}

\section{Proof of Theorem~\ref{mainthm_EPPNT}}
\label{sec_pf_EPPNT}

The proof of Theorem~\ref{mainthm_EPPNT} is similar to that of Theorem~\ref{mainthm_EKPNT}. First, we show that
\begin{equation}\label{eqn_EP_pmax}
	\lim_{x \to \infty}\frac1x\sum_{\substack{1\le n\le x\\ P^+(n)\in S}} F\Big( \frac{\Omega(\varphi(n)) - \frac12(\log \log x)^2 }{\frac1{\sqrt3}(\log \log x)^{3/2}} \Big) 
	=
	\delta(S)\cdot\Big(\frac{1}{\sqrt{2\pi}} \int_{-\infty}^{\infty} F(t) e^{-t^2/2} \, dt\Big)
\end{equation}
for $F\in C_c(\R)$. To prove \eqref{eqn_EP_pmax}, we make some adjustments in the proof of Theorem~\ref{mainthm_EK} in Sect.~\ref{sec_pf_mainthm_EK}. Similar to Sect.~\ref{sec_pf_mainthm_EK}, we put $\psi(n)\colonequals \frac{\Omega(\varphi(n)) - \frac12(\log \log x)^2 }{\frac1{\sqrt3}(\log \log x)^{3/2}}$ here,  then it suffices to show that
\begin{equation}\label{eqn_pf_EP_goal}
	\sum_{\substack{1\le n\le x\\ P^+(n)\in S}} F(\psi(n))=\delta(S) \sum_{1\le n\le x}F(\psi(n))+o(x).
\end{equation}

Similar to \eqref{pf_eq}-\eqref{pf_s2}, we have
\begin{equation}
	\sum_{\substack{1\le n\le x\\ P^+(n)\in S}} F(\psi(n))= S_8+O\set{x\exp(-\tfrac12u\log u)+x\exp\big(-c(\log x\log\log x)^{\frac12}\big)}
\end{equation}
for $1\le u\le \log x/(\log\log x)^2$, where
$$S_8\colonequals\sum_{\substack{y\le p\le x\\ p\in S}}\sum_{\substack{1\le n\le x/p \\ P^+(n)<p}}F\left(\psi(n)+\frac{\Omega(p-1)}{\frac1{\sqrt3}(\log \log x)^{3/2}}\right).$$

Let $\ve$ be an arbitrarily small positive. Since $F$ is in $C_c(\R)$, it is uniformly coninuous on $\R$.  There exists a positive $\eta=\eta(\ve)>0$ such that 
$$\vert F(x)-F(y)\vert <\ve$$
for all $x,y\in\R$ with $\vert x-y\vert \le  \eta$. Also, $F$ is bounded, so we may assume that $\vert F\vert \le C$ for some constant $C>0$. Now, we set $\eta_0\colonequals(\log\log x)^{-1/4}$. Suppose $x$ is large enough such that $\eta_0<\eta$. Then we break the following difference on $S_8$ up into two parts:
\begin{align}
	&\quad\Big\vert S_8-\sum_{\substack{y\le p\le x\\ p\in S}}\sum_{\substack{1\le n\le x/p \\ P^+(n)<p}}F(\psi(n))\Big\vert\nonumber\\
	&\le\sum_{\substack{y\le p\le x\\ p\in S}}\sum_{\substack{1\le n\le x/p \\ P^+(n)<p}}\Big\vert F\left(\psi(n)+\frac{\Omega(p-1)}{\frac1{\sqrt3}(\log \log x)^{3/2}}\right)-F(\psi(n))\Big\vert\nonumber\\
	&\le \sum_{\substack{y\le p\le x\\ \frac{\Omega(p-1)}{\frac1{\sqrt3}(\log \log x)^{3/2}}\le \eta_0}}\sum_{\substack{1\le n\le x/p \\ P^+(n)<p}} \ve +\sum_{\substack{y\le p\le x\\ \frac{\Omega(p-1)}{\frac1{\sqrt3}(\log \log x)^{3/2}}>\eta_0}}\sum_{\substack{1\le n\le x/p \\ P^+(n)<p}} 2C \nonumber\\
	&\le \ve\sum_{p\le x}\sum_{\substack{1\le n\le x/p \\ P^+(n)<p}} 1 + 2C\sum_{\substack{y\le p\le x\\ \frac{\Omega(p-1)}{\frac1{\sqrt3}(\log \log x)^{3/2}}>\eta_0}}\frac{x}p \nonumber\\
	&\le \ve x+ \frac{2\sqrt{3}Cx}{\eta_0(\log \log x)^{3/2}} \sum_{y\le p\le x} \frac{\Omega(p-1)}p. \label{eqn_pf_EP_error_term}
\end{align}

By \cite[Lemma~2.3]{ErdosPomerance1985}, we have
\begin{equation}\label{eqn_pf_EP_summation}
	\sum_{1\le p\le x} \frac{\Omega(p-1)}p=\frac12(\log\log x)^2+O(\log\log x).
\end{equation}
Recalling $u=\log x/\log y$, it follows by \eqref{eqn_pf_EP_summation} that
\begin{equation}
	\sum_{y\le p\le x} \frac{\Omega(p-1)}p=\log u\log\log x-\frac12(\log u)^2+O(\log\log x).
\end{equation}
This is bounded by $O\of{\log\log x\log \log \log x}$, provided that $1\le u\le \log \log x$. 

Suppose $1\le u\le \log \log x$. By \eqref{eqn_pf_EP_error_term} and $\eta_0=(\log\log x)^{-1/4}$, 
\begin{align}
	\Big\vert S_8-\sum_{\substack{y\le p\le x\\ p\in S}}\sum_{\substack{1\le n\le x/p \\ P^+(n)<p}}F(\psi(n))\Big\vert\le \ve x+O\of{\frac{x \log \log \log x}{(\log\log x)^{1/4}}}.
\end{align}
Since $\ve$ is arbitrarily small, we obtain that
\begin{equation}
	S_8=\sum_{\substack{y\le p\le x\\ p\in S}}\sum_{\substack{1\le n\le x/p \\ P^+(n)<p}}F(\psi(n))+o(x).
\end{equation}
Then applying the argument from \eqref{eqn_pf_s2_first_term}-\eqref{key_estimate}, we get that
\begin{equation}
	\sum_{\substack{1\le n\le x\\ P^+(n)\in S}} F(\psi(n))=\delta(S) \sum_{1\le n\le x}F(\psi(n))+R(x,y)
\end{equation}
with
$$R(x,y)\ll  x \of{\exp(-\tfrac12u\log u)+ (v_S(y)+v_\cP(y))\log x+\frac{1}{y}+o(1)}.$$
Applying Lemma~\ref{choice_of_y} again, there exists $y=y(x)$ satisfying $u\le \log\log x$ such that $R(x,y)=o(x)$. Hence \eqref{eqn_pf_EP_goal} follows, and \eqref{eqn_EP_pmax} holds.

Now, for Euler's function $\varphi(n)$, we have the following multiplicative identity
$$\varphi(mn)=\varphi(m)\varphi(n)\frac{\gcd(m,n)}{\varphi(\gcd(m,n))}$$
for any $m,n\in \N$. From this identity we can see that 
$$\Omega(\varphi(mn))=\Omega(\varphi(n))+A$$
for some constant $A$ depending on $m$ and divisors of $m$ only. It follows that
\eqref{eqn_keyeqn2} holds with respect to $\Omega(\varphi(n))$, and Lemma~\ref{lem_keylem_EKPNT} and Theorem~\ref{thm_keythm_pmax} hold with respect to $\Omega(\varphi(n))$ as well. Hence Theorem~\ref{mainthm_EPPNT} follows similarly by the argument of Theorem~\ref{mainthm_EKPNT}. \qed

\bmhead{Acknowledgements}
This work is supported by the National Natural Science Foundation of China (No. 12288201). The first author  is supported by the China Postdoctoral Science Foundation under grant number 2021TQ0350. The authors would like to thank Liyang Yang for his helpful discussions and comments. We are also grateful to the anonymous referees for the helpful corrections and suggestions. 

\bmhead{Conflict of interest statement}
The authors  certify that they have no affiliations with or involvement in any organization or entity with any financial interest or nonfinancial interest in the subject matter or materials discussed in this manuscript.


\end{document}